

\documentclass[12pt, twoside, english, headsepline]{article}


\usepackage[margin=30mm]{geometry}
%
%
%
%
\usepackage{tikz,graphicx}
\usetikzlibrary{arrows}
\usetikzlibrary{patterns}
\newcommand{\midarrow}{\tikz \draw[-triangle 90] (0,0) -- +(.1,0);}
\tikzset{
  mid arrow/.style={postaction={decorate,decoration={
        markings,
        mark=at position .5 with {\arrow[#1]{stealth}}
      }}},
}
\usetikzlibrary{decorations.markings} 
\tikzset
 {every pin/.style = {pin edge = {<-}}, 
  > = stealth, 
  flow/.style = 
   {decoration = {markings, mark=at position #1 with {\arrow{>}}},
    postaction = {decorate}
   },
  flow/.default = 0.5,   
  main/.style = {line width=1pt}
 }

\usepackage{amsmath,amsthm,amssymb,amsfonts, mathtools, amsbsy, dsfont}
\usepackage[utf8]{inputenx}
\usepackage{xcolor}
\usepackage{microtype}
\usepackage{hyperref}
\hypersetup{linkcolor=blue}



\newcommand*\dif{\mathop{}\!\mathrm{d}}  
\newcommand{\Conv}{\mathop{\scalebox{2.8}{\raisebox{-0.2ex}{$\ast$}}}}

\allowdisplaybreaks 

\newtheorem{theorem}{Theorem}[section] 
\newtheorem{corollary} {Corollary}[section] 
\newtheorem{proposition}{Proposition}[section] 

\theoremstyle{definition} 
 
\newtheorem{example}{Example}[section]
\newtheorem{remark}{Remark}[section] 
\numberwithin{equation}{section} 


\providecommand{\keywords}[1]
{
  \small	
  \textbf{\textit{Keywords: }} #1
}
\providecommand{\MSC}[1]
{
  \small	
  \textit{2020 MSC: } #1   
}


\title{General Airy-type equations, heat-type equations and pseudo-processes}
\author{Fabrizio Cinque$^1$ and Enzo Orsingher$^2$\\
        \small Department of Statistical Sciences, Sapienza University of Rome, Italy \\
        \small $^1$fabrizio.cinque@uniroma1.it $^2$enzo.orsingher@uniroma1.it
}




\begin{document}

\maketitle

\begin{abstract}
We present a systematic study of higher-order Airy-type differential equations providing the explicit form of the solutions, deriving their power series expansions and a probabilistic interpretation. Under suitable convergence hypotheses, we compute their integral on the real line and, by means of complex integration, we provide alternative explicit forms. We then focus on the differential equations governing their derivatives, their products, their convolutions and higher-order Scorer type equations. Then, we study higher-order heat-type fractional Cauchy, showing that their fundamental solutions can be expressed in terms of Airy-type functions and their convolutions, recovering as special cases several results of appeared in previous papers. Furthermore, pseudo-processes theory permits us to give nice interpretations of the results, extending them in the case of equations involving different fractional operators and study the moments of the solutions.
\end{abstract} \hspace{10pt}

\keywords{Scorer Equation; Partial Differential Equations; Stable Subordinators and Inverses; Fractional Operators}

\MSC{Primary 33C10, 34A05, 35K25; Secondary 60K99}

\section{Introduction}

The physical origin of Airy functions is the mathematical description of waves propagating through a hole. Airy \cite{A1838} showed that the intensity of light on the line (caustic) separating the shaded zone from the light one for a monocromatic light at point $P$ with distance $R$ from the hole can be approximated, by means of geometric assumptions, with the function of kind
\begin{equation}\label{formulaApprossimazioneAiry}
 u_P \sim \int_I e^{-ik\bigl(aw + b w^3\bigr)} \dif w,
\end{equation}
with suitable parameters $a,b, I$ (see Vall\'{e}e-Soares \cite{VS2010} page 115).
Inspired by the Airy integral, in this paper we study suitable generalization of this function by solving the higher-order Airy-type ordinary differential equations and by increasing the order of approximation given in (\ref{formulaApprossimazioneAiry}), i.e. by considering functions of the form
\begin{equation}\label{AiryConvoluzioneIntro}
 u(x) = \int_0^\infty e^{-ixw + \sum_{k=0}^N a_k \frac{w^{2k+1}}{2k+1} } \dif w.
\end{equation}
We prove that function (\ref{AiryConvoluzioneIntro}) can be expressed as the convolution of higher-order Airy functions (also known as hyper-Airy functions) and can be related to suitable pseudo-processes.

Hyper-Airy functions recently appeared in several papers such as \cite{A2017, AA2014, AA2020, AA2024, MO2023}. In particular, hyper-Airy functions emerge in the study of higher-order heat-type equations of real order such as, with $\alpha>1$,
\begin{equation}\label{problemaParzialeCauchyAiryRealeIntro}
\begin{cases}
\frac{\partial u}{\partial t}(t,x) = D^\alpha_x u(t,x), \ \ \ t\ge0,\ x\in \mathbb{R},  \\
u(0,x) = f(x), \ \ \forall\ x,\\
\lim_{x\rightarrow \pm \infty} u(t,x) = 0, \ \ \forall\ t,
\end{cases}
\end{equation}
where $D^\alpha_x$ is the Riesz space fractional operator and it is implicitly defined by means of the Fourier transform, that is, for $\gamma\in\mathbb{R}$,
\begin{equation}\label{trasformataFourierOperatoreRiesz}
\mathcal{F} \bigl(D^\alpha_x f \bigr) (\gamma) = 
\begin{cases}
\begin{array}{l l}
 -i\,\text{sgn}(\gamma) |\gamma|^\alpha \mathcal{F}f(\gamma), & \alpha\not\in\mathbb{N}\\
(-i\gamma)^\alpha \mathcal{F}f(\gamma), & \alpha\in\mathbb{N},
\end{array}
\end{cases}
\end{equation}
meaning that for integer values of $\alpha$ the Riesz operator coincides with the classical derivative.

In the paper \cite{MO2023}, the authors show that the solution to the Cauchy problem (\ref{problemaParzialeCauchyAiryRealeIntro}) with $u(0,x) = f(x) = \delta(x)$, the Dirac delta function centered in $0$, reads 
\begin{equation}\label{soluzioneProblemDifferenzialeParzialeIperAiryReale}
u_\alpha(t, x) = \frac{1}{\pi}\int_0^\infty \cos(xw + tw^\alpha) \dif w = \frac{1}{\sqrt[\alpha]{\alpha t}} A_{\alpha}\Bigl(\frac{x}{\sqrt[\alpha]{\alpha t}}\Bigr),
\end{equation}
where 
\begin{equation}\label{funzioneAiryReale}
A_{\alpha}(x) = \frac{1}{\pi} \int_0^\infty \cos\Bigl(xw +\frac{w^\alpha}{\alpha}\Bigr) \dif w, \ \ x\in\mathbb{C}, \ \alpha > 1,
\end{equation}
is the hyper-Airy function of order $\alpha$ (in the sense given in the article \cite{MO2023}).
If $\alpha = 3$, formula (\ref{funzioneAiryReale}) yields the classical Airy function.

In the paper \cite{MO2023} the authors proved that the solution $u_\alpha$ can be written as the expectation of a suitable function of a random variable, such as
\begin{equation}\label{funzioneAiryAspettativa}
u_{\alpha}(t,x) = \frac{1}{\pi x}\mathbb{E} \biggl[e^{-b_\alpha xG_{\alpha}(1/t)} \sin\Bigl(a_\alpha x G_\alpha(1/t)\Bigr) \biggr],
\end{equation}
with $G $ having probability density function $f_G(w;\alpha, \tau) = \frac{\alpha w^{\alpha-1}}{\tau} e^{-w^\alpha/\tau}$,  
$a_\alpha = \cos\bigl(\pi/(2\alpha)\bigr)$ and $b_\alpha = \sin\bigl(\pi/(2\alpha)\bigr)$.
This result was already presented for the odd case ($\alpha=2n+1$) in \cite{ODo2012}. Below, we provide a similar representation for the functions we deal with.
\\
%

In the case of $\alpha = 2n+1$, the starting point of the research on Airy-type functions, in analogy to the well-known third-order Airy functions, can be found in the $2n$-th-order ordinary differential equation 
\begin{equation*}
y^{(2n)} + (-1)^n xy = 0, \ \  x \in \mathbb{C},\ n\in\mathbb{N},
\end{equation*}
 which in the case of $n=1$ admits the classical Airy functions as solutions, which are, for $x\in\mathbb{R}$,
\begin{equation}\label{funzioniAiryClassiche}
\text{Ai}(x) = \frac{1}{\pi} \int_0^\infty \cos\Bigl(xw +\frac{w^3}{3}\Bigr) \dif w \ \ \text{and}\ \ \text{Bi}(x) = \frac{1}{\pi}\int_0^\infty \biggl[e^{xw -\frac{w^3}{3}}+ \sin\Bigl(xw +\frac{w^3}{3}\Bigr)\biggr] \dif w.
\end{equation}

In this work, we study the more general form 
\begin{equation}\label{equazioneDifferenzialeGeneraleIntro}
y^{(n-1)} + cxy = 0, \ \ \ x\in \mathbb{C},\ c\in \mathbb{R}\setminus{\{0\}}, \ \text{integer } n\ge 2,
\end{equation}
which has been studied in some cases, see \cite{A2017, GHPD2013}.
By means of a spectral approach we derive the $n-1$ independent solutions to (\ref{equazioneDifferenzialeGeneraleIntro}), that, for example, for $n$ odd and real $c\not=0$ or $n$ even and $c<0$, can be listed as follows, with $x\in \mathbb{C}$ and natural $k \le n/2$,
\begin{align}
y_k^+ (x) &= \int_0^\infty e^{-(-c)^{n-1}\frac{w^n}{n}}\Biggl[e^{-cwx} - e^{-cwx a_k}\cos\Bigl( cwx b_k - \frac{2k\pi}{n} \Bigr) \Biggr]  \dif w, \label{soluzioneAiryGeneralek+}\\
&= \sum_{j=0}^\infty \frac{(-\text{sgn}(c)x)^j}{j!} \Biggl[1-\cos\Bigl((j+1)\frac{2k\pi}{n} \Bigr) \Biggr]\Gamma\Bigl(\frac{j+1}{n}\Bigr)(n|c|)^{\frac{j+1}{n}-1},\nonumber
\end{align}
\begin{align}
y_k^-(x) &= - \int_0^\infty e^{-(-c)^{n-1}\frac{w^n}{n}- cwx a_k} \sin\Bigl( cwx b_k - \frac{2k\pi}{n} \Bigr) \dif w \label{soluzioneAiryGeneralek-}\\
&= \sum_{j=0}^\infty \frac{(-\text{sgn}(c)x)^j}{j!} \sin\Bigl((j+1)\frac{2k\pi}{n} \Bigr) \Gamma\Bigl(\frac{j+1}{n}\Bigr)(n|c|)^{\frac{j+1}{n}-1}, \nonumber
\end{align}
where $a_k = \cos(2k\pi/n)$ and $b_k = \sin(2k\pi/n)$.
Solutions $y_k^+$ and $y_k^-$ differ because of the presence of a different trigonometric function and an additional term in $y_k^+$. This recalls the structure of classical Airy functions $\text{Ai}$ and $\text{Bi}$, displayed in (\ref{funzioniAiryClassiche}), which can be seen as a particular case of (\ref{soluzioneAiryGeneralek+}) and (\ref{soluzioneAiryGeneralek-}). More generally, thanks to the integration in the complex plane $\mathbb{C}$, we prove that these solutions reduce to the known hyper-Airy functions for $n$ odd and some specific $k$.

Functions (\ref{soluzioneAiryGeneralek+}) and (\ref{soluzioneAiryGeneralek-}) are analytic and they admit the given power series representation, which is derived as the Taylor expansion in $x=0$. In some cases, the solutions to (\ref{equazioneDifferenzialeGeneraleIntro}) have finite integral on the real line (or on either the positive or negative semi-axis); we provide some examples below.
\\Furthemore, we show that the two terms of (\ref{soluzioneAiryGeneralek+}) satisfy a higher-order Scorer-type equation, that is,
$$ y^{(n-1)} - cxy = 1, \ \ \ x\in\mathbb{C}.$$

In accordance with the probabilistic interpretation (\ref{funzioneAiryAspettativa}) given for the hyper-Airy functions, we show that the solutions of the general differential equation (\ref{equazioneDifferenzialeGeneraleIntro}) admit similar representations; for instance, we can write function (\ref{soluzioneAiryGeneralek-}) as
$$ y_k^-(x) = - \frac{\Gamma(1/n)}{(-nc)^{1-\frac{1}{n}}}\mathbb{E}\Biggl[ e^{-c\sqrt[n]{Y}xa_k} \sin\Bigl( c\sqrt[n]{Y}xb_k - \frac{2k\pi}{n}\Bigr) \Biggr], $$
with $Y\sim Gamma\Bigl(\frac{1}{n}, \frac{(-c)^{n-1}}{n}\Bigr)$ being a gamma random variable.
\\

From the structure of the differential equation (\ref{equazioneDifferenzialeGeneraleIntro}) we derive the non-homogeneous ordinary differential equation satisfied by the derivatives of the solutions to (\ref{equazioneDifferenzialeGeneraleIntro}) as well as the higher-order equations governing their product. We conclude Section 2 investigating the ordinary differential equation solved by the convolution (and its derivatives) of the solutions to equations of type (\ref{equazioneDifferenzialeGeneraleIntro}). By assuming $f_{n}$ solving (\ref{equazioneDifferenzialeGeneraleIntro}) with $c = c_n$, function $y = \Conv_{k=1}^N f_{n_k}$ satisfies
\begin{equation}
\sum_{k=1}^N y^{(n_k - 1)} \prod_{\substack{h=1\\ h\not=k}}^N c_{n_h} + x y \prod_{k=1}^N c_{n_k} = 0.
\end{equation}

In Section 3 we study a generalization of problem (\ref{problemaParzialeCauchyAiryRealeIntro}) with more general differential equations, with real $\alpha_1,\dots,\alpha_N\not=0$ and suitable real $a_1,\dots a_N$,
\begin{equation}\label{equazioneCaloreGeneraleIntro}
 \frac{\partial u}{\partial t}(t,x) = \sum_{k=1}^N a_k D_x^{\alpha_k} u(t,x),\ \ \ t\ge0,\ x\in\mathbb{R}, 
\end{equation}
showing that the solution is of the type of function (\ref{AiryConvoluzioneIntro}) and that it can be expressed as the convolution of the hyper-Airy functions (\ref{funzioneAiryReale}). We also state in detail the relationship between the higher-order heat-type equations and the Airy-type ones, proving that the solution to (\ref{equazioneCaloreGeneraleIntro}) for $N=1$ and integer order $n$ satisfies equation (\ref{equazioneDifferenzialeGeneraleIntro}) with $c=1/(nta_1)$.

Finally, we study the pseudo-processes related to problem (\ref{equazioneCaloreGeneraleIntro}) and those involving a time-fractional operator as well as the Riesz-Feller (space) fractional operator, showing that the solution can still be interpreted as the pseudo-density of the position of a suitably time-changed pseudo-process. In the integer case, we also compute the moments of the pseudo-processes.

\section{Generalized ordinary Airy-type differential equations}\label{sezioneMotoPianoTreDirezioni}

Inspired by the differential equation governing the hyper-Airy functions,
\begin{equation}\label{equazioneAiryDispari}
y^{(2n)} + (-1)^n xy = 0, \ \ \ x\in \mathbb{C},\ c\in \mathbb{R}\setminus{\{0\}}, \ n\in\mathbb{N},
\end{equation}
we approach a more general differential equation stating the following theorem.

\begin{theorem}\label{teoremaSoluzioneEquazioneDifferenzialeGenerale}
Let $n\ge2$ be an integer number. The ordinary differential equation
\begin{equation}\label{equazioneDifferenzialeGenerale}
y^{(n-1)} + cxy = 0, \ \ \ x\in \mathbb{C},\ c\in \mathbb{R}\setminus{\{0\}},
\end{equation}
admits the following $n-1$ linearly independent solutions:
\begin{itemize}
\item if $n$ odd or $n$ even and $c<0$,
\begin{align}
&y_k^+ (x) = \int_0^\infty e^{-(-c)^{n-1}\frac{w^n}{n}} \Biggl[e^{-cwx} - e^{-cwx a_k}\cos\Bigl( cwx b_k - \frac{2k\pi}{n} \Bigr) \Biggr] \dif w, \label{soluzioneEquazioneDifferenzialeGeneraleSomma} \\
&y_k^-(x) = - \int_0^\infty e^{-(-c)^{n-1}\frac{w^n}{n}- cwx a_k} \sin\Bigl( cwx b_k - \frac{2k\pi}{n} \Bigr) \dif w, \label{soluzioneEquazioneDifferenzialeGeneraleDifferenza}
\end{align}
with natural $k \le n/2$ and $a_k = \cos(2k\pi/n)$ and $b_k = \sin(2k\pi/n)$;
\item if $n$ even and $c>0$,
\begin{align}
&y_k^+ (x) = \int_0^\infty e^{-c^{n-1}\frac{w^n}{n}}\dif w  \nonumber\\ 
&\ \ \ \times \Biggl[e^{i\frac{\pi}{n}}e^{-cwxe^{i\pi/n}} - e^{-cwx a_k}\cos\Bigl( cwx b_k - \frac{2k+1}{n}\pi \Bigr) \Biggr] , \ \ \ k=1,\dots,\frac{n}{2}-1, \label{soluzioneEquazioneDifferenzialeGeneraleSommaPari}  \\
&y_k^-(x) = - \int_0^\infty e^{-c^{n-1}\frac{w^n}{n}- cwx a_k} \sin\Bigl( cwx b_k - \frac{2k+1}{n}\pi \Bigr) \dif w, \ \ \  k =0,\dots, \frac{n}{2}-1, \label{soluzioneEquazioneDifferenzialeGeneraleDifferenzaPari}
\end{align}
with $a_k = \cos\bigl((2k+1)\pi/n\bigr)$ and $b_k = \sin\bigl((2k+1)\pi/n\bigr)$.
\end{itemize}
\end{theorem}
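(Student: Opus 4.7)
My plan is to seek solutions of~\eqref{equazioneDifferenzialeGenerale} in the form of a spectral integral
\[
y(x) \,=\, \int_\Gamma e^{-cwx}\,f(w)\,\dif w,
\]
for a suitable weight $f$ and a contour $\Gamma\subset\mathbb{C}$. Substituting into the equation, using the identity $x\,e^{-cwx} = -\frac{1}{c}\,\partial_w\, e^{-cwx}$ and integrating by parts once in $w$, I would obtain
\[
y^{(n-1)}(x) + c\,x\,y(x) \,=\, \int_\Gamma e^{-cwx}\bigl[(-cw)^{n-1}f(w) + f'(w)\bigr]\dif w \,-\, \bigl[e^{-cwx}f(w)\bigr]_{\partial\Gamma}.
\]
This vanishes provided $f$ solves the first-order linear ODE $f'(w) = -(-cw)^{n-1}f(w)$, whose solution is
\[
f(w) \,=\, \exp\!\Bigl(-(-c)^{n-1}\tfrac{w^n}{n}\Bigr),
\]
exactly the weight appearing in~\eqref{soluzioneEquazioneDifferenzialeGeneraleSomma}--\eqref{soluzioneEquazioneDifferenzialeGeneraleDifferenzaPari}, provided the endpoints of $\Gamma$ lie in sectors where the boundary term dies.

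Next I would classify these sectors. Writing $w = re^{i\theta}$, decay holds when $(-c)^{n-1}\cos(n\theta)>0$. When $(-c)^{n-1}>0$, i.e.\ $n$ odd with any $c\neq0$ or $n$ even with $c<0$, the decay sectors of aperture $\pi/n$ are centred at $\theta_k = 2k\pi/n$; in the remaining case $n$ even, $c>0$ they are rotated and centred at $\theta_k = (2k+1)\pi/n$. Since the integrand is entire, Cauchy's theorem allows me to choose each $\Gamma_k$ as the union of the ray from $\infty e^{i\theta_k}$ to $0$ followed by the ray from $0$ to $\infty e^{i\theta_0}$, with $\theta_0$ the first admissible direction ($0$ or $\pi/n$). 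Parametrising both rays by the real variable $r\ge0$, the identity $w^n = \pm r^n$ along every chosen ray reduces $f$ to a real exponential decay in $r$, and collecting the two pieces produces a single integral on $(0,\infty)$. Separating real and imaginary parts with $a_k = \cos\theta_k$, $b_k = \sin\theta_k$ and applying standard trigonometric identities would yield precisely the representations $y_k^+$ and $y_k^-$ displayed in the statement.

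Finally, I would verify the count and linear independence. The index $k=0$ gives the trivial solution, while for $k=1,\dots,\lfloor n/2\rfloor$ each pair $y_k^{\pm}$ contributes two real independent functions, except when $b_k=0$, which happens only for $n$ even and $k=n/2$, where $y_{n/2}^-\equiv0$ and only $y_{n/2}^+$ survives. This produces exactly $n-1$ solutions in both $(-c)^{n-1}>0$ subcases, matching the order of~\eqref{equazioneDifferenzialeGenerale}; an analogous bookkeeping handles the rotated case. Independence and the power series expansions would then follow simultaneously by Taylor-expanding $e^{-cwx}$ and swapping sum with integral (legitimate by the super-exponential decay of $f$), each monomial in $x$ reducing via $v = (-c)^{n-1}w^n/n$ to a Gamma integral, with the trigonometric coefficients $1-\cos\bigl((j+1)\tfrac{2k\pi}{n}\bigr)$ and $\sin\bigl((j+1)\tfrac{2k\pi}{n}\bigr)$ yielding linearly independent sequences as $k$ varies. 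I expect the subtlest point to be the even $c>0$ case, where the base ray lies at angle $\pi/n$ rather than on the positive real axis; the resulting complex prefactor $e^{i\pi/n}$ in~\eqref{soluzioneEquazioneDifferenzialeGeneraleSommaPari} requires a careful tracking of contour orientations and phases to land on the exact formulas of the statement.
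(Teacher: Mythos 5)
Your proposal follows essentially the same route as the paper: the same spectral ansatz $\int_\Gamma e^{-cwx}f(w)\,\dif w$, the same weight $f(w)=e^{-(-c)^{n-1}w^n/n}$ obtained from one integration by parts, the same classification of decay sectors by the sign of $(-c)^{n-1}\cos(n\theta)$, and the same pairs of rays through the origin combined into real and imaginary parts to produce $y_k^{\pm}$. The only notable divergence is that you argue linear independence via the Taylor coefficients at $x=0$ (the trigonometric sequences in $k$), whereas the paper uses a Cauchy--Goursat wedge-separation argument; your variant is arguably more concrete, but both remain sketches at that step.
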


We note that in the case of $n =2$, as well known, the solution of (\ref{equazioneDifferenzialeGenerale}) is the Gaussian function. For instance, in the case of $c <0$, by means of (\ref{soluzioneEquazioneDifferenzialeGeneraleSomma}) with $k=1$ we readily obtain
\begin{align*}
y_1^+(x)& = \int_0^\infty e^{-(-c)\frac{w^2}{2}}\Bigl[ e^{-cwx} + e^{cwx}\Bigr] \dif w\\
& =  2\sum_{k\ge0} \frac{(cx)^{2k}}{(2k)!}\int_0^\infty e^{-(-c)\frac{w^2}{2}} w^{2k} \dif w\\
& =  \sum_{k\ge0} \frac{(cx)^{2k}}{(2k)!}\int_0^\infty e^{-(-c)\frac{y}{2}} y^{k+\frac{1}{2}} \dif y\\
& =  \sum_{k\ge0} \frac{(cx)^{2k}}{(2k)!} \Gamma\biggl(k+\frac{1}{2}\biggr)\biggl(\frac{2}{-c}\biggr)^{k+\frac{1}{2}}\\
& =  \sum_{k\ge0} \biggl(\frac{-cx^2}{2}\biggr)^{k}	\frac{1}{k!} \sqrt{\frac{2}{-c}} \\
&= \sqrt{\frac{2}{-c}} e^{-\frac{c}{2}x^2}, \ \ \ c<0, \ x\in\mathbb{C},
\end{align*}
where we used the Gamma duplication formula, $\Gamma(k+1/2) = \Gamma(2k)2^{1-2k}/\Gamma(k)$. The interested reader can prove the case for $c>0$ in a similar manner by using (\ref{soluzioneEquazioneDifferenzialeGeneraleDifferenzaPari}) with $k=0$.

\begin{proof}
We use a spectral approach (also known as Laplace contour integrals), therefore we study a solution of the type $g(x) = \int_\mathcal{C} e^{-cxw}f(x)\dif w$ where $\mathcal{C}$ denotes a suitable contour in the complex plane $\mathbb{C}$ and $f:\mathbb{C}\longrightarrow\mathbb{C}$ is a suitable function. Now, we determine $f$ and $\mathcal{C}$.

By inserting $g$ into the equation (\ref{equazioneDifferenzialeGenerale}), by means of some calculation also involving an integration by parts, we obtain the equation
\begin{equation}\label{passaggioSoluzioneEquazioneDifferenzialeGenerale}
 \int_C e^{-cxw}\Bigl((-c)^{n-1}w^{n-1}f(w)+ f'(w)\Bigr) \dif x- f(w)e^{-cwx}\Big|_\mathcal{C} =0.
\end{equation}
Then, we derive $f$ by requiring $(-c)^{n-1}w^{n-1}f(w)+ f'(w)=0$ which leads to $f(w) = e^{-(-c)^{n-1}w^n/n}, w\in\mathbb{C}.$

Now, given the form of $f$ and equation (\ref{passaggioSoluzioneEquazioneDifferenzialeGenerale}), we derive the contour $\mathcal{C}$ by solving the equation $e^{-cwx-(-c)^{n-1}w^n/n}\Big|_\mathcal{C} =0$. First, we study this condition on the half-line coming out of the origin $\gamma_\theta$, that is expressed as $\phi_\theta(t) = te^{i\theta},\ t\in (0,\infty)$, and we calculate the angles $\theta\in[0,2\pi]$ which permit us to have a finite result:
\begin{align}
e^{-cwx-(-c)^{n-1}w^n/n}\Big|_{\gamma_\theta} &= \exp\Bigl(-(-c)^{n-1}\frac{t^n e^{in\theta}}{n} - cxte^{i\theta}\Bigr)\Big|_{t = 0}^{t \rightarrow \infty}\nonumber\\
&= -1 + \lim_{t\rightarrow \infty} \frac{\exp\Bigl(-i(-c)^{n-1}\sin(n\theta)t^n/n - ic \sin(\theta)x t\Bigr)}{\exp\Bigl((-c)^{n-1}\cos(n\theta)t^n/n + c\cos(\theta)xt \Bigr)}. \label{limiteSulContornoSemiretta}
\end{align}
Thus, we need 
\begin{equation}\label{condizioneConvergenzaLimiteContornoSemiretta}
\theta \text{ such that } \begin{array}{l} (i)\ \  \ (-c)^{n-1}\cos(n\theta) >0,\\
(ii) \ \ (-c)^{n-1}\cos(n\theta) =0 \text{ and } c\bigl(\cos(\theta)\Re(x)-\sin(\theta)\Im(x)\bigr) > 0 .
\end{array}
\end{equation}
From $(i)$ we obtain that the limit in (\ref{limiteSulContornoSemiretta}) is finite (and equal to $0$) for $\theta$ such that
\begin{equation}\label{finePrimaCondizioneConvergenzaLimiteContornoSemiretta}
\begin{cases}
\cos(n\theta) < 0, \ \ \ \text{if } c>0 \text{ and } n \text{ even},\\
\cos(n\theta)>0, \ \ \ \text{if } n \text{ odd or }c<0 \text{ and } n \text{ even}.
\end{cases}
\end{equation}
This condition is sufficient for constructing the solutions we need. However, the convergence result required by condition $(ii)$ of (\ref{condizioneConvergenzaLimiteContornoSemiretta}) will be useful in Remark \ref{remarkFormaSoluzioniTipoAiry} for some manipulation of the form of the solutions by means of complex integration (more precisely the application of Cauchy-Goursat theorem).

Now, we give the complete proof in the second case of (\ref{finePrimaCondizioneConvergenzaLimiteContornoSemiretta}), so we require $n$ odd or $n$ even and $c<0$ (note that these conditions imply that $(-c)^{n-1}=|c|^{n-1}\ge0$). In this case, the limit in (\ref{limiteSulContornoSemiretta}) is finite (and equal to $-1$) if $\cos(n\theta)>0$ (it is a sufficient condition because we are not considering the possibility in $(ii)$ of (\ref{condizioneConvergenzaLimiteContornoSemiretta})), that is if
\begin{equation}\label{insiemeTheta}
\theta\in \bigcup_{k=0}^{n-1} \Bigl(\frac{4k-1}{2n}\pi, \frac{4k+1}{2n}\pi\Bigr) =: I,
\end{equation}
which is a collection of intervals of size $\pi/n$.

\begin{figure}
\begin{minipage}{0.49\textwidth}
		\centering
\begin{tikzpicture}[scale = 0.77]

\draw[pattern=horizontal lines, pattern color=lightgray] (0,0) -- (4.6*0.95, 4.6*0.31) -- (4.6*0.95, 4.6*-0.31) -- cycle;
\draw[pattern=horizontal lines, pattern color=lightgray] (0,0) -- (4.6*0.59, 4.6*0.81) -- (0, 4.6) -- cycle;
\draw[pattern=horizontal lines, pattern color=lightgray] (0,0) -- (4.6*-0.59, 4.6*0.81) -- (4.6*-0.95, 4.6*0.31) -- cycle;
\draw[pattern=horizontal lines, pattern color=lightgray] (0,0) -- (4.6*-0.95, 4.6*-0.31) -- (4.6*-0.59, 4.6*-0.81) -- cycle;
\draw[pattern=horizontal lines, pattern color=lightgray] (0,0) -- (0, -4.6) -- (4.6*0.59, 4.6*-0.81) -- cycle;

		\draw[->, thick, gray] (-5,0) -- (5,0) node[below, scale = 1, black]{$\pmb{\Re}$};
		\draw[->, thick, gray] (0,-5) -- (0,5) node[left, scale = 1, black]{$\pmb{\Im}$};
\draw (0,0) -- (4.5*0.95, 4.5*0.31);
\draw (0,0) -- (4.5*0.59, 4.5*0.81);
\draw (0,0) -- (4.5*-0.59, 4.5*0.81);
\draw (0,0) -- (4.5*-0.95, 4.5*0.31);
\draw (0,0) -- (4.5*-0.95, 4.5*-0.31);
\draw (0,0) -- (4.5*-0.59, 4.5*-0.81);
\draw (0,0) -- (4.5*0.59, 4.5*-0.81);
\draw (0,0) -- (4.5*0.95, 4.5*-0.31);

\draw (3,0) node[above]{$\gamma_0$};
\draw (4.5*0.31, 4.5*0.95) node[above]{$\gamma_1$};
\draw (4.5*-0.81, 4.5*0.58) node[above]{$\gamma_2$};
\draw (4.5*-0.81, 4.5*-0.58) node[below]{$\gamma_3$};
\draw (4.5*0.31, 4.5*-0.95) node[below]{$\gamma_4$};

\begin{scope}[very thick, , every node/.style={sloped,allow upside down}]
\draw (0,0) -- node{\midarrow} (4.5,0); 
\draw   (4.5*0.31, 4.5*0.95) -- node {\midarrow} (0,0) ;
\draw   (4.5*-0.81, 4.5*0.58) -- node {\midarrow} (0,0) ;

\draw   (4.5*-0.81, 4.5*-0.58) -- node {\midarrow} (0,0) ;
\draw   (4.5*0.31, 4.5*-0.95) -- node {\midarrow} (0,0) ;

\end{scope}
\end{tikzpicture}
\caption{Curves of the complex plane used to build the solutions to equation (\ref{equazioneDifferenzialeGenerale}) with $n=5$.}\label{figuraCurveDimostrazioneTeorema}
	\end{minipage}\hfill
	\begin{minipage}{0.49\textwidth}
		\centering
\begin{tikzpicture}[scale = 0.77]

\draw[pattern=horizontal lines, pattern color=lightgray] (0,0) -- (4.6*0.966, 4.6*0.259) -- (4.6*0.707, 4.6*0.707) -- cycle;
\draw[pattern=horizontal lines, pattern color=lightgray] (0,0) -- (4.6*0.259, 4.6*0.966) -- (4.6*-0.259, 4.6*0.966) -- cycle;
\draw[pattern=horizontal lines, pattern color=lightgray] (0,0) -- (4.6*-0.966, 4.6*0.259) -- (4.6*-0.707, 4.6*0.707) -- cycle;
\draw[pattern=horizontal lines, pattern color=lightgray] (0,0) -- (4.6*-0.966, 4.6*-0.259) -- (4.6*-0.707, 4.6*-0.707) -- cycle;
\draw[pattern=horizontal lines, pattern color=lightgray] (0,0) -- (4.6*-0.259, 4.6*-0.966) -- (4.6*0.259, 4.6*-0.966) -- cycle;\draw[pattern=horizontal lines, pattern color=lightgray] (0,0) -- (4.6*0.966, 4.6*-0.259) -- (4.6*0.707, 4.6*-0.707) -- cycle;

		\draw[->, thick, gray] (-5,0) -- (5,0) node[below, scale = 1, black]{$\pmb{\Re}$};
		\draw[->, thick, gray] (0,-5) -- (0,5) node[left, scale = 1, black]{$\pmb{\Im}$};
%
%
%
%
%
%

\draw (4.5*0.866, 4.5*0.5) node[right]{$\gamma_0$};
\draw (4.5*0, +4.5) node[above right]{$\gamma_1$};
\draw (4.5*-0.866, 4.5*0.5) node[left]{$\gamma_2$};
\draw (4.5*-0.866, 4.5*-0.5) node[left]{$\gamma_3$};
\draw (4.5*0, -4.5) node[below right]{$\gamma_4$};
\draw   (4.5*0.866, 4.5*-0.5) node[ right]{$\gamma_5$} ;

\begin{scope}[very thick, , every node/.style={sloped,allow upside down}]
\draw (0,0) -- node{\midarrow}  (4.5*0.866, 4.5*0.5); 
\draw  (4.5*0.866, 4.5*-0.5) -- node{\midarrow} (0,0) ; 

\draw   (4.5*0, 4.5) -- node {\midarrow} (0,0) ;
\draw   (4.5*0, -4.5) -- node {\midarrow} (0,0) ;
\draw   (4.5*-0.866, 4.5*0.5) -- node {\midarrow} (0,0) ;
\draw  (4.5*-0.866, 4.5*-0.5) -- node{\midarrow} (0,0) ; 

\end{scope}
\end{tikzpicture}
\caption{Curves of the complex plane used to build the solutions to equation (\ref{equazioneDifferenzialeGenerale}) with $n=6$ and $c>0$.}\label{figuraCurvePariDimostrazioneTeorema}
\end{minipage}
\end{figure}

We note that so far we have found that on the half-lines $\gamma_\theta$, with $\theta$ in (\ref{insiemeTheta}), the expression of (\ref{limiteSulContornoSemiretta}) is $-1$ (for all $\theta\in I$), but we need to build a contour $\mathcal{C}$ where the result is $0$. Therefore, we consider the middle points of the sets composing $I$, that are $\theta_k =2k\pi/n,\ k=0,\dots,n-1$. Then, we slightly modify the notation and consider $\gamma_0$ the positive real half-line, that is parametrized as $\phi_0 (t) = t,\ t\in[0,\infty)$, and $\gamma_k$ the half-line with angle $\theta_k, k\not=0$, and entering in the origin, that is represented by $\phi_k(t) = -te^{i\theta_k},\ t\in (-\infty,0)$; see Figure \ref{figuraCurveDimostrazioneTeorema}. In light of the above considerations we readily have that
\begin{equation}\label{limiteSulContornoFinale}
e^{-cwx-(-c)^{n-1}w^n/n}\Big|_{\gamma_k \oplus \gamma_0} = (0 + 1) + (-1 + 0) = 0,
\end{equation}
and therefore we can consider the solutions, with $ k=1,\dots,n-1$,
\begin{align}
y_{k} (x)& = \int_{\gamma_k\oplus\gamma_0} e^{-cwx-(-c)^{n-1}\frac{w^n}{n}}\dif w \nonumber\\
 & = \int_0^{\infty}  e^{-ctx-(-c)^{n-1}\frac{t^n}{n}} \dif t - e^{i\frac{2k\pi}{n}}\int_0^\infty  e^{-(-c)^{n-1}\frac{t^n}{n}-ctxe^{i2k\pi/n}} \dif t,\ \ \ k=1,\dots,n-1,\label{soluzioneKesimaEquazioneDifferenzialeGenerale}
\end{align}
where the second equality comes after some calculations.

To prove that the solutions in (\ref{soluzioneKesimaEquazioneDifferenzialeGenerale}) are linearly independent, we observe that, in view of the Cauchy-Goursat theorem, $y_k$ coincides with any function $\tilde{f}_k$ of the required type and whose contour $\tilde{\mathcal{C}}$ has borders on the same section as those of the curves $\gamma_0$ and $\gamma_k$ (meaning going to infinity in the wedges of angle $\theta_0 \in\bigl(-\pi/(2n),\pi/(2n)\bigr)$ and $\theta_k \in\bigl((4k-1)\pi/(2n),(4k+1)\pi/(2n)\bigr)$ respectively). Indeed, by classical methods, it is possible to construct a series of closed path relating $\tilde{\mathcal{C}}$ and $\gamma_k \oplus\gamma_0$ showing the equality of the solutions. Therefore, if by contradiction we assume $y_h=y_k$ with $h\not=k$, then we cannot derive $n-1$ independent solutions for the equations by means of the $n$ wedges determined by the angles in the set $I$ in (\ref{insiemeTheta}) (intuitively we do not have enough "separated" wedges).

Now, from the $n-1$ independent solutions in (\ref{soluzioneKesimaEquazioneDifferenzialeGenerale}) we obtain those presented in the statement of the Theorem by calculating
\begin{equation}\label{soluzioniFinaliEquazioneDifferernzialeGenerale}
 y_k^+ (x) = \frac{y_{k}(x)+ y_{n-k}(x)}{2} \ \ \text{ and }\ \ y_k^- (x) = \frac{-y_{k}(x)+ y_{n-k}(x)}{2i}, \ \ \ \text{ natural } k\le\frac{n}{2}.
\end{equation}
Note that in the case of $n$ even, with $k = n/2$ we have $y_{n/2}^- = 0$ and $y_{n/2}^+ = y_{n/2}$ (see Example \ref{esempioSoluzionePariAsseX}).

Below we show the calculation of $y_k^-$; the case $y_k^+$follows in a similar manner. Let us consider $a_k = \cos(2k\pi/n)$ and $b_k = \sin(2k\pi/n)$,
\begin{align}
y_k^- (x) &= \frac{-y_{k}(x)+ y_{n-k}(x)}{2i} \nonumber\\
&=\frac{1}{2i} \Biggl[(a_k+ i b_k) \int^{\infty}_0 e^{-(-c)^{n-1}\frac{t^n}{n}-ctx(a_k+ib_k)}\dif t -(a_k- i b_k) \int^{\infty}_0 e^{-(-c)^{n-1}\frac{t^n}{n}-ctx(a_k-ib_k)} \dif t\Biggr]  \nonumber\\
&=\frac{1}{2i} \int^{\infty}_0 e^{-(-c)^{n-1}\frac{t^n}{n}-ctxa_k} \Bigl[ a_k\bigl(e^{-ictxb_k}-e^{ictxb_k} \bigr) + ib_k \bigl(e^{-ictxb_k}+e^{ictxb_k} \bigr) \Bigr]\dif t  \nonumber\\
& = \int^{\infty}_0 e^{-(-c)^{n-1}\frac{t^n}{n}-ctxa_k}\bigl[ -a_k \sin(ctxb_k)+ b_k \cos(ctxb_k)\bigr] \dif t\nonumber \\
& =  -\int^{\infty}_0 e^{-(-c)^{n-1}\frac{t^n}{n}-ctxa_k} \sin\Bigl(ctxb_k - \frac{2k\pi}{n}\Bigr) \dif t\label{calcoliSoluzioneEsplicitaFinaleDifferenza}
\end{align}
which coincides with (\ref{soluzioneEquazioneDifferenzialeGeneraleDifferenza}).
\\

In the case of $n$ even and $c>0$, i.e. the first case of (\ref{finePrimaCondizioneConvergenzaLimiteContornoSemiretta}), we proceed as shown above. The condition on $\theta$ is $\cos(n\theta)<0$ (it is a sufficient condition because we are not considering the possibility in $(ii)$ of (\ref{condizioneConvergenzaLimiteContornoSemiretta})), therefore we must consider
\begin{equation}\label{insiemeThetaPari}
\theta\in \bigcup_{k=0}^{n-1} \biggl(\frac{4k+1}{2n}\pi, \frac{4k+3}{2n}\pi\biggr).
\end{equation}
Here, we consider the half-lines $\gamma_0$, coming out of the origin and having angle $\theta_ 0 =\pi/n$, described by $\phi_0 (t) = te^{i\pi/n},\ t\in (0,\infty)$, and $\gamma_k$, entering in the origin and with angles $\theta_k =(2k+1)\pi/n$, described by $\phi_k (t) = -te^{i(2k+1)\pi/n},\ t\in (-\infty,0)$, for $k=1,\dots, n-1$; see Figure \ref{figuraCurvePariDimostrazioneTeorema}. Note that the angles we consider are the middle point of the intervals in (\ref{insiemeThetaPari}).
Then, we suitably adapt the solutions presented in (\ref{soluzioneKesimaEquazioneDifferenzialeGenerale}), with $k=1,\dots, n-1,$
\begin{align}
y_{k} (x)& = \int_{\gamma_k\oplus\gamma_0} e^{-cwx-(-c)^{n-1}\frac{w^n}{n}}\dif w \nonumber\\
 & = e^{i\frac{\pi}{n}}\int_0^{\infty}  e^{-ctxe^{i\pi/n}-c^{n-1}\frac{t^n}{n}} \dif t - e^{i\frac{(2k+1)\pi}{n}}\int_0^\infty  e^{-c^{n-1}\frac{t^n}{n}-ctxe^{i(2k+1)\pi/n}} \dif t.\label{soluzioneKesimaEquazioneDifferenzialeGeneralePari}
\end{align}
Note that for $k=n-1$, by dividing by $2i$ and performing some calculations similar to those in (\ref{calcoliSoluzioneEsplicitaFinaleDifferenza}), we obtain function (\ref{soluzioneEquazioneDifferenzialeGeneraleDifferenzaPari}) for $k=0$. The remaining $n-2$ linearly independent solutions given in (\ref{soluzioneEquazioneDifferenzialeGeneraleSommaPari}) and (\ref{soluzioneEquazioneDifferenzialeGeneraleDifferenzaPari}) are derived following the arguments in (\ref{soluzioniFinaliEquazioneDifferernzialeGenerale}), but setting $k=1,\dots,n/2-1$.
\end{proof}

The solutions presented in Theorem \ref{teoremaSoluzioneEquazioneDifferenzialeGenerale} can be easily expressed in terms of expected value of an exponential random variable. For instance, considering function (\ref{soluzioneEquazioneDifferenzialeGeneraleDifferenza}) and assuming that $x\in \mathbb{R}$ and $cxa_k>0$, we have
$$y_k^-(x) = - \frac{1}{cxa_k}\mathbb{E}\Biggl[ e^{-(-c)^{n-1}\frac{Y^n}{n}} \sin\Bigl( cYxb_k - \frac{2k\pi}{n}\Bigr) \Biggr], $$
where $Y\sim Exp(cxa_k)$ is an exponential random variable.

Furthermore, in order to avoid the condition depending on $x$, we can write, 
$$y_k^-(x) = - \frac{\Gamma(1/n)}{(-nc)^{1-\frac{1}{n}}}\mathbb{E}\Biggl[ e^{-c\sqrt[n]{Y}xa_k} \sin\Bigl( c\sqrt[n]{Y}xb_k - \frac{2k\pi}{n}\Bigr) \Biggr], $$
with $x\in\mathbb{C}$ and $Y\sim Gamma\Bigl(\frac{1}{n}, \frac{(-c)^{n-1}}{n}\Bigr)$ being a gamma random variable. The above results suggest that, as already proved in \cite{MO2023, ODo2012} for some particular cases, the Airy-type functions can be seen as the expectation of a damped random oscillations.

\begin{remark}[Complex constant $c$]
We point out that in the case of $c\in\mathbb{C}$, one can still use the arguments applied in the proof of Theorem \ref{teoremaSoluzioneEquazioneDifferenzialeGenerale}. In particular, we obtain a different condition on the parameter $\theta$ from the limit value (\ref{limiteSulContornoSemiretta}). Indeed, in this case we need to consider a more difficult condition. For instance we have
\begin{align*}
\exp\Bigl( -(-c)^{n-1}e^{in\theta}\Bigr) &= \exp\Bigl( -\bigl(-|c|\bigr)^{n-1}e^{i(n-1)\text{Arg}(c) + in\theta}\Bigr) \\
&= \exp\Biggl( \bigl(-|c|\bigr)^{n-1} \Bigl(\cos\bigl((n-1)\text{Arg}(c)+n\theta\bigr) + i\sin\bigl((n-1)\text{Arg}(c)+n\theta\bigr)\Bigr)\Biggr),
\end{align*}
and derive some conditions corresponding to (\ref{condizioneConvergenzaLimiteContornoSemiretta}) (note that the above exponential is not the only element that one must consider to derive the condition on $\theta$ since we did not write the term involving $e^{i\theta}$ in (\ref{limiteSulContornoSemiretta})).
\hfill$\diamond$
\end{remark}

\begin{example}\label{esempioSoluzionePariAsseX}
We show some particular cases of the solutions in Theorem \ref{teoremaSoluzioneEquazioneDifferenzialeGenerale}.
\\

$a.$ Some of the displayed functions do not involve trigonometric elements. For instance, for $n$ even and $c<0$, the solution corresponding to $k=n/2$ in (\ref{soluzioneEquazioneDifferenzialeGeneraleSomma}) reads, $x\in\mathbb{C}$,
$$y_{n/2}^+(x) =  \int_{-\infty}^\infty  e^{-(-c)^{n-1}\frac{w^n}{n}- cwx} \dif w.$$
We note that in this case the half-line denoted with $\gamma_{n/2}$ in (\ref{soluzioneKesimaEquazioneDifferenzialeGenerale}) is the negative real half-line entering in the origin.
\\

$b.$ Some cases present a very simple structure. For instance, if $n=4m$, with $m\in\mathbb{N}$, the functions (\ref{soluzioneEquazioneDifferenzialeGeneraleSomma}) and (\ref{soluzioneEquazioneDifferenzialeGeneraleDifferenza}) with $k=m$ reduces to (remember $c<0$)
\begin{align}
&y_m^+ (x) = \int_0^\infty e^{\frac{(cw)^{4m}}{4mc}} \Bigl[e^{-cwx} - \sin( cwx ) \Bigr] \dif w, \nonumber\\
&y_m^-(x) =  \int_0^\infty e^{\frac{(cw)^{4m}}{4mc}} \cos( cwx ) \ \dif w,  \label{caso4mDifferenza}
\end{align}
where we also used the fact that $\cos(x-\pi/2) = \sin(x)$. 
\\
On the other hand, if $n = 2(2m+1)$, $m\in\mathbb{N}$, the function (\ref{soluzioneEquazioneDifferenzialeGeneraleDifferenzaPari}) with $k=m$ reduces to (remember $c>0$)
\begin{equation}\label{caso4m+2Differenza}
y_m^-(x) =  \int_0^\infty e^{\frac{-(cw)^{4m+2}}{(4m+2)c}} \cos( cwx ) \ \dif w,
\end{equation}
which has the same form of the solution (\ref{caso4mDifferenza}) (where $c<0$).
\\

$c.$ For $n=5$,  the functions (\ref{soluzioneEquazioneDifferenzialeGeneraleSomma}) and (\ref{soluzioneEquazioneDifferenzialeGeneraleDifferenza}) have coefficients $a_k = \cos(2k\pi/5), \ b_k = \sin(2k\pi/5)$ for $k=1,2$, which can be expressed in terms of the golden ratio, $\varphi = (1+\sqrt{5})/2$. Indeed, the following relationships hold
\begin{align*}
\cos\frac{\pi}{5} = \frac{\varphi}{2},\ \ \ \sin\frac{\pi}{5} = \frac{\sqrt[4]{5}}{2\sqrt{\varphi}},\ \ \ \cos\frac{2\pi}{5} = \frac{1}{2\varphi},\ \ \ \sin\frac{2\pi}{5} = \frac{\sqrt[4]{5}\sqrt{\varphi}}{2}.
\end{align*}
We refer to \cite{A2017} for further details on the Airy-type equation of order $n=5$.
\end{example}

\begin{remark}\label{remarkFormaSoluzioniTipoAiry}
In the case of real $x$, the functions (\ref{soluzioneEquazioneDifferenzialeGeneraleSomma}) and (\ref{soluzioneEquazioneDifferenzialeGeneraleDifferenza}), by means of the complex integration, can be written as follows, whenever the integral converges. For $k<n/2, a^+_k = \cos\bigl(\frac{4k+1}{2n}\pi\bigr), b^+_k = \sin\bigl(\frac{4k+1}{2n}\pi\bigr)$ and $x\in\mathbb{R}$,
\begin{align}
&y_k^+ (x) = \int_0^\infty e^{-(-c)^{n-1}\frac{w^n}{n}-cwx} \dif w -  \int_0^\infty  e^{-cwxa^+_k}\cos\Bigl( cwx b^+_k +(-c)^{n-1}\frac{w^n}{n} - \frac{4k+1}{2n}\pi \Bigr) \dif w, \label{soluzioneEquazioneDifferenzialeGeneraleSommaBordoSuperioreB} \\
&y_k^-(x) = - \int_0^\infty  e^{-cwxa^+_k}\sin\Bigl( cwx b^+_k +(-c)^{n-1}\frac{w^n}{n} - \frac{4k+1}{2n}\pi\Bigr) \dif w. \label{soluzioneEquazioneDifferenzialeGeneraleDifferenzaBordoSuperioreA}
\end{align}
Alternatively, with $a^-_k = \cos\bigl(\frac{4k-1}{2n}\pi\bigr), b^-_k = \sin\bigl(\frac{4k-1}{2n}\pi\bigr)$ and $x\in\mathbb{R}$,
\begin{align}
&y_k^+ (x) = \int_0^\infty e^{-(-c)^{n-1}\frac{w^n}{n}-cwx} \dif w -  \int_0^\infty  e^{-cwxa^-_k}\cos\Bigl( cwx b^-_k -(-c)^{n-1}\frac{w^n}{n} - \frac{4k-1}{2n}\pi \Bigr) \dif w, \label{soluzioneEquazioneDifferenzialeGeneraleSommaBordoInferioreB} \\
&y_k^-(x) = - \int_0^\infty  e^{-cwxa^-_k}\sin\Bigl( cwx b^-_k -(-c)^{n-1}\frac{w^n}{n} - \frac{4k-1}{2n}\pi \Bigr) \dif w. \label{soluzioneEquazioneDifferenzialeGeneraleDifferenzaBordoInferioreA}
\end{align}
It is important to point out that the above formulas are valid if $ c a_k^{\pm}x \ge0$, meaning that the integrals converge.
We also note that (\ref{soluzioneEquazioneDifferenzialeGeneraleSommaBordoSuperioreB}) and (\ref{soluzioneEquazioneDifferenzialeGeneraleSommaBordoInferioreB}) as well as  (\ref{soluzioneEquazioneDifferenzialeGeneraleDifferenzaBordoSuperioreA}) and (\ref{soluzioneEquazioneDifferenzialeGeneraleDifferenzaBordoInferioreA}) differ only for the sign of the term $(-c)^{n-1}\frac{w^n}{n}$ in the trigonometric functions (and obviously for the coefficient $a_k^\pm$ and $b_k^\pm$).

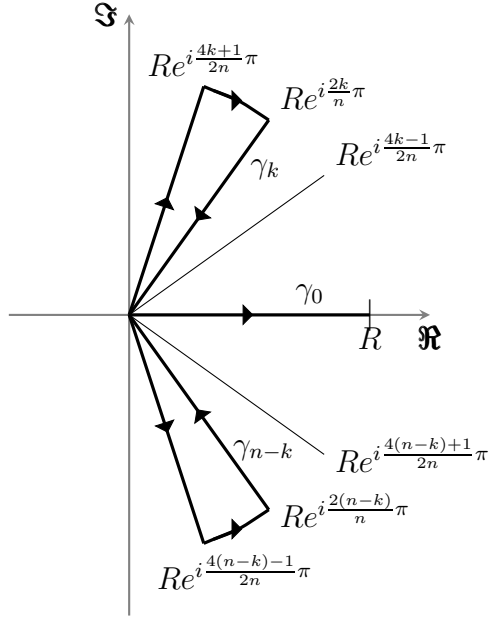
\begin{figure}
\centering
\begin{tikzpicture}[scale = 0.8]
		\draw[->, thick, gray] (-2,0) -- (5,0) node[below, scale = 1, black]{$\pmb{\Re}$};
		\draw[->, thick, gray] (0,-5) -- (0,5) node[left, scale = 1, black]{$\pmb{\Im}$};
\draw (0,0) -- (4*0.58, 4*0.81) node[above right]{$Re^{i\frac{2k}{n}\pi}$};
\draw (3.9*0.58, 3*0.81) node{$\gamma_k$};
\draw (0,0) -- (4*0.58, -4*0.81) node[ right]{$Re^{i\frac{2(n-k)}{n}\pi}$};
\draw (3.85*0.58, -3*0.75) node{$\gamma_{n-k}$};

\draw (0,0) -- (4*0.81, 4*0.58) node[above right]{$Re^{i\frac{4k-1}{2n}\pi}$};
\draw (0,0) -- (4*0.81, -4*0.58) node[ right]{$Re^{i\frac{4(n-k)+1}{2n}\pi}$};
\draw (4*0.31, 4*0.95) node[above ]{$Re^{i \frac{4k+1}{2n}\pi}$};
\draw (4*0.31, -4*0.95) node[below]{$\ \ \ \ \ \ Re^{i \frac{4(n-k)-1}{2n}\pi}$};

\draw (3,0) node[above]{$\gamma_0$};
\draw (4,0) node{$|$};
\begin{scope}[very thick, , every node/.style={sloped,allow upside down}]
\draw (0,0) -- node {\midarrow} (4,0) node[below]{$R$};
\draw  (4*0.58, 4*0.81) -- node{\midarrow} (0,0);
\draw  (0,0)  -- node {\midarrow} (4*0.31, 4*0.95);
\draw[black, very thick] plot [smooth] coordinates {(4*0.31, 4*0.95) (4*0.35, 4*0.933) (4*0.454, 4*0.891) } node{\midarrow};
\draw[black, very thick] plot [smooth] coordinates {(4*0.35, 4*0.933) (4*0.454, 4*0.891) (4*0.58, 4*0.81) };

\draw  (4*0.58, -4*0.81) -- node{\midarrow} (0,0);
\draw  (0,0)  -- node {\midarrow} (4*0.31, -4*0.95);
\draw[black, very thick] plot [smooth] coordinates {(4*0.31, -4*0.95) (4*0.35, -4*0.933) (4*0.454, -4*0.891) } node{\midarrow};
\draw[black, very thick] plot [smooth] coordinates {(4*0.35, -4*0.933) (4*0.454, -4*0.891) (4*0.58, -4*0.81) };
\end{scope}
\end{tikzpicture}
\caption{Example of the contours of integration.}\label{figuraCurvaIntegrazione}
\end{figure}


We now prove the relationship between formulas (\ref{soluzioneEquazioneDifferenzialeGeneraleDifferenzaBordoSuperioreA}) and (\ref{soluzioneEquazioneDifferenzialeGeneraleDifferenza}). First, we recall the form of the solution $y_k^-$ in the proof of Theorem \ref{teoremaSoluzioneEquazioneDifferenzialeGenerale}, that is, with natural $k \le n/2$,
\begin{align}
y_k^- (x)& = \frac{-y_{k}(x)+ y_{n-k}(x)}{2i} \nonumber\\
&= \frac{1}{2i} \Biggl[ - \int_{\gamma_k}  e^{-cwx-(-c)^{n-1}\frac{w^n}{n}}\dif w +\int_{\gamma_{n-k}}  e^{-cwx-(-c)^{n-1}\frac{w^n}{n}}\dif w \Biggr]\label{formulazioneSoluzionePerFormaAiry}
\end{align}
where we used the first equality in formula (\ref{soluzioneKesimaEquazioneDifferenzialeGenerale}) for $y_k$ and $y_{n-k}$ and $\gamma_k, \gamma_{n-k}$ defined before (\ref{limiteSulContornoFinale}).
Now, we observe that, if $c\cos\bigl(\frac{4k+1}{2n}\pi\bigr)x >0$, then, according to condition $(ii)$ of (\ref{condizioneConvergenzaLimiteContornoSemiretta}), we can include the angle $\theta = \frac{4k+1}{2n}\pi$ in the set $I$, given in (\ref{insiemeTheta}). This means that the function in the integrals of (\ref{formulazioneSoluzionePerFormaAiry}) does not have any pole in the complex region 
$$\Bigg\{z\in\mathbb{C} \,:\, Arg(z)\in \biggl(\frac{4k-1}{2n}\pi, \frac{4k+1}{2n}\pi\biggr]\Bigg\}$$
and therefore, we can use the Cauchy-Goursat theorem to rewrite the integral on $\gamma_k$. In detail, we integrate on the contour given in Fig. \ref{figuraCurvaIntegrazione} (consider the positive imaginary side of the complex plane), deriving
\begin{align}
-\int_{\gamma_k}&  e^{-cwx-(-c)^{n-1}\frac{w^n}{n}}\dif w\nonumber \\
&= \lim_{R\rightarrow\infty}  \int_{\frac{2k\pi}{n}}^{\frac{4k+1}{2n}\pi}  e^{-cx(Re^{i\theta})-(-c)^{n-1}\frac{\bigl(Re^{i\theta}\bigr)^n}{n}} iRe^{i\theta}\dif \theta \nonumber \\
&\ \ \ + \lim_{R\rightarrow\infty}  \int_0^R  \exp\Biggl(-cx(te^{i\frac{4k+1}{2n}\pi})-(-c)^{n-1}\frac{\bigl(te^{i\frac{4k+1}{2n}\pi}\bigr)^n}{n}\Biggr)\, e^{i\frac{4k+1}{2n}\pi}\dif t\nonumber\\
& = 0+ e^{i\frac{4k+1}{2n}\pi} \int_0^\infty  \exp\Bigl(-cxte^{i\frac{4k+1}{2n}\pi}-i(-c)^{n-1}\frac{t^n}{n}\Bigr) \dif t \nonumber\\
& = (a_k^+ + i b_k^+) \int_0^\infty  \exp\Biggl(-cxta_k^+-i\Bigl[ctxb_k^+ +(-c)^{n-1}\frac{t^n}{n}\Bigr]\Biggr) \dif t.\label{integraleDopoTrasformazioneCurva}
\end{align}
We point out that the first limit converges to $0$ since for all $R>0$,
\begin{align*} 
\int_{\frac{2k\pi}{n}}^{\frac{4k+1}{2n}\pi} \Big|e^{-cx(Re^{i\theta})-(-c)^{n-1}\frac{\bigl(Re^{i\theta}\bigr)^n}{n}} iRe^{i\theta}\Big| \dif\theta \le \int_{\frac{2k\pi}{n}}^{\frac{4k+1}{2n}\pi} \Big|e^{-cxR\cos\theta-(-c)^{n-1}\frac{R^n\cos(n\theta)}{n}} R\Big | \dif \theta <\infty,
\end{align*}
where the convergence is assured thanks to the conditions on $\theta$ in (\ref{condizioneConvergenzaLimiteContornoSemiretta}).

Then, we use the above arguments to rewrite the integral on $\gamma_{n-k}$. It is important to note that (remember $x$ real) $cx\cos\bigl(\frac{4k+1}{2n}\pi\bigr) \ge0 \iff cx\cos\bigl(\frac{4(n-k)-1}{2n}\pi\bigr) \ge0$ and this is simply verified because 
$$\cos\biggl(\frac{4(n-k)-1}{2n}\pi\biggr) = \cos\biggl(2n\pi-\frac{4k+1}{2n}\pi\biggr) = \cos\biggl(\frac{4k+1}{2n}\pi\biggr).$$
Furthermore, we also observe that $e^{i\frac{4(n-k)+1}{2n}\pi} = \cos\bigl(\frac{4k+1}{2n}\pi\bigr) + i \sin\bigl(-\frac{4k+1}{2n}\pi\bigr) =a_k^+ - ib_k^+$ which is the conjugate of the complex number appearing in (\ref{integraleDopoTrasformazioneCurva}). Thus, we obtain
\begin{equation}\label{secondoIntegraleDopoTrasformazioneCurva}
\int_{\gamma_{n-k}}  e^{-cwx-(-c)^{n-1}\frac{w^n}{n}}\dif w = -(a_k^+ - i b_k^+) \int_0^\infty  \exp\Biggl(-cxta_k^+ +i\biggl[ctxb_k^+ +(-c)^{n-1}\frac{t^n}{n}\biggr]\Biggr) \dif t.
\end{equation}
Now, by inserting (\ref{integraleDopoTrasformazioneCurva}) and (\ref{secondoIntegraleDopoTrasformazioneCurva}) in (\ref{formulazioneSoluzionePerFormaAiry}) we readily arrive at result (\ref{soluzioneEquazioneDifferenzialeGeneraleDifferenzaBordoSuperioreA})
\begin{align*}
y_k^- (x)& = \frac{1}{2i} \int_0^\infty e^{-ctxa_k^+} \Biggl[ a_k^+ \biggl( e^{-i\bigl[ctxb_k^+ +(-c)^{n-1}\frac{t^n}{n}\bigr]} - e^{i\bigl[ctxb_k^+ +(-c)^{n-1}\frac{t^n}{n}\bigr]} \biggr) \\
& \ \ \ + ib_k^+ \biggl( e^{-i\bigl[ctxb_k^+ +(-c)^{n-1}\frac{t^n}{n}\bigr]} + e^{i\bigl[ctxb_k^+ +(-c)^{n-1}\frac{t^n}{n}\bigr]} \biggr)\Biggr] \dif t \\
& = \int_0^\infty e^{-ctxa_k^+}  \Biggl[ - a_k^+\sin\biggl( ctxb_k^+ +(-c)^{n-1}\frac{t^n}{n}\biggr) + b_k^+ \cos\biggl( ctxb_k^+ +(-c)^{n-1}\frac{t^n}{n}\biggr) \Biggr] \dif t \\
& =  - \int_0^\infty  e^{-ctxa^+_k}\sin\Bigl( ctx b^+_k +(-c)^{n-1}\frac{t^n}{n} - \frac{4k+1}{2n}\pi\Bigr) \dif t.
\end{align*}

The proof of the formulas (\ref{soluzioneEquazioneDifferenzialeGeneraleSommaBordoSuperioreB}), (\ref{soluzioneEquazioneDifferenzialeGeneraleSommaBordoInferioreB}) and (\ref{soluzioneEquazioneDifferenzialeGeneraleDifferenzaBordoInferioreA}) are derived in a similar way.

We point out that these are not the only alternative integral forms of these class of functions, indeed one could suitably modify the contour of integration. 
Furthermore, similar computation can be equivalently performed for solutions of the differential equation (\ref{equazioneDifferenzialeGenerale}) with $n$ even and $c>0$, given in (\ref{soluzioneEquazioneDifferenzialeGeneraleSommaPari}) and (\ref{soluzioneEquazioneDifferenzialeGeneraleDifferenzaPari}).
\hfill$\diamond$
\end{remark}

\begin{example}
It is interesting to observe that the above representations are closer to the classical form of the hyper-Airy functions. It is easy to verify that for $n = 2m+1,\ m\in\mathbb{N}$, if $m$ is even then (\ref{soluzioneEquazioneDifferenzialeGeneraleDifferenzaBordoSuperioreA}), by setting $k = m/2$ and  using that $\sin(x + \pi/2) = \cos(x)$, reduces a hyper-Airy type function (\ref{funzioneAiryReale}) with $\alpha = 2m+1$. Indeed, $a_{m/2}^+ =\cos\bigl(\frac{4m/2+1}{2n}\pi\bigr) = \cos(\pi/2) = 0, b^+_k  =\sin(\pi/2) = 1$,
\begin{align}
y_{m/2}^-(x) &= - \int_0^\infty  \sin\Bigl( cwx +(-c)^{2m}\frac{w^{2m+1}}{2m+1} - \frac{\pi}{2}\Bigr) \dif w \nonumber\\
& =  \int_0^\infty  \cos\Bigl( cwx +c^{2m}\frac{w^{2m+1}}{2m+1} \Bigr) \dif w,\ \ \ m \text{ even, } \label{iperAirySoluzioneEquazioneGenerale}
\end{align}
which resembles (\ref{funzioneAiryReale}). Finally, by considering $c = (-1)^m = 1$ (since $m$ is even) we obtain (\ref{funzioneAiryReale}) with $\alpha = 2m+1$ and this solves the differential equation (\ref{equazioneAiryDispari}). We point out that in the case of $m$ odd one must use formula (\ref{soluzioneEquazioneDifferenzialeGeneraleDifferenzaBordoInferioreA}) for the sake of convergence and set $k = (m+1)/2$, obtaining 
\begin{equation}\label{iperAirySoluzioneEquazioneGeneraleCasoDispari}
y_{\frac{m+1}{2}}^-(x) =  \int_0^\infty \cos\Bigl( cwx -c^{2m}\frac{w^{2m+1}}{2m+1} \Bigr) \dif w,\ \ \ m \text{ odd. }
\end{equation}
\hfill$\diamond$
\end{example}

\begin{remark}
We point out that the functions in Theorem \ref{teoremaSoluzioneEquazioneDifferenzialeGenerale} are not all integrable on both half-lines. It is useful to remind that, for instance, also $\int_0^\infty Bi(x) \dif x =\infty$, with $Bi$ the Airy function defined in (\ref{funzioniAiryClassiche}). Now, we show how to perform the integral of functions of the type of (\ref{soluzioneEquazioneDifferenzialeGeneraleDifferenza}) on the positive half-line, but the technique can be used also for the other possible functions (whose integral converges).
\\Assume $\theta\in[0,\pi]$ and $a_\theta = \cos\theta, b_\theta=\sin\theta $ such that $ca_\theta \ge0$,
\begin{align}
 - \int_0^\infty& \dif x \int_0^\infty e^{-(-c)^{n-1}\frac{t^n}{n}-ctxa_\theta}\sin(ctxb_\theta - \theta)\dif t \nonumber\\
&= \int_0^\infty \lim_{\mu \downarrow 0} e^{-\mu x} \dif x \int_0^\infty  e^{-(-c)^{n-1}\frac{t^n}{n}-ctxa_\theta} \Im\Bigl(e^{-i(ctxb_\theta-\theta)}\Bigr) \dif t \nonumber\\
& = \int_0^\infty e^{-(-c)^{n-1}\frac{t^n}{n}}\dif t \int_0^\infty  \lim_{\mu \downarrow 0} e^{-\mu x} \Im\Bigl(e^{-ctxe^{i\theta}+i\theta}\Bigr) \dif x \nonumber\\
& = \int_0^\infty e^{-(-c)^{n-1}\frac{t^n}{n}}\dif t \lim_{\mu \downarrow 0}  \Im\Biggl( \int_0^\infty  e^{-\mu x-ctxe^{i\theta}+i\theta} \dif x \Biggr) \label{primoPassaggioTCDCalcoloIntegrale}\\
& = \int_0^\infty e^{-(-c)^{n-1}\frac{t^n}{n}}  \lim_{\mu \downarrow 0}  \Im\Biggl( \frac{e^{i\theta}}{\mu+cte^{i\theta}}\, \frac{\mu + cte^{-i\theta}}{\mu + cte^{-i\theta}}\Biggr)   \dif t \nonumber\\
& = \int_0^\infty e^{-(-c)^{n-1}\frac{t^n}{n}}  \lim_{\mu \downarrow 0}  \Im\Biggl( \frac{\mu e^{i\theta} + ct}{\mu^2+c^2t^2+2\mu ct\cos\theta}\Biggr) \dif t \nonumber\\
& = \int_0^\infty e^{-(-c)^{n-1}\frac{t^n}{n}}  \lim_{\mu \downarrow 0}  \frac{\mu \sin\theta}{\mu^2+c^2t^2+2\mu ct\cos\theta} \dif t \nonumber\\
& =  \lim_{\mu \downarrow 0} \int_0^\infty e^{-(-c)^{n-1}\frac{(\mu s)^n}{n}}  \frac{\mu \sin\theta}{\mu^2+c^2\mu^2s^2+2\mu^2 cs\cos\theta} \mu\dif s \label{secondoPassaggioTCDCalcoloIntegrale}\\
& = \int_0^\infty \lim_{\mu \downarrow 0} e^{-(-c)^{n-1}\frac{(\mu s)^n}{n}}  \frac{\sin\theta}{1+c^2s^2+ cs\cos\theta} \dif s \label{terzoPassaggioTCDCalcoloIntegrale} \\
& = \int_0^\infty \frac{\sin\theta}{(cs+ \cos\theta)^2 + \sin(\theta)^2} \dif s \nonumber \\
& = \int_{\text{cotg}\theta}^{\text{sgn(c)}\infty} \frac{1}{c} \frac{\dif w}{w^2+1} \label{quartoPassaggioTCDCalcoloIntegrale} \\
& =  -  \frac{1}{c} \text{arccotg}(w)\Big|_{\text{cotg}\theta}^{\text{sgn}(c)\infty} \nonumber\\
& = \begin{cases}\begin{array}{l l}
\frac{\pi-\theta}{-c}, &c<0,\\
\frac{\theta}{c}, & c>0.
\end{array}\end{cases} \label{integraleGeneraleSemiAssePositivoSoluzioneTipoA}
\end{align}
We used the dominated convergence theorem in steps (\ref{primoPassaggioTCDCalcoloIntegrale}), (\ref{secondoPassaggioTCDCalcoloIntegrale}) and (\ref{terzoPassaggioTCDCalcoloIntegrale}). In (\ref{secondoPassaggioTCDCalcoloIntegrale}) we also used the change of variable $\mu s = t$, while in step (\ref{quartoPassaggioTCDCalcoloIntegrale}) we used the change of variable $cs + cos\theta = w\sin\theta$. We also point out that we have never exchange the limit and the imaginary part, in fact one must be very careful since the real part of the complex number may diverge.
\\
Note that the result (\ref{integraleGeneraleSemiAssePositivoSoluzioneTipoA}) does not depend on the parameter $n$, but this is required to be greater than $1$ for the need of convergence.

If $n$ odd we can use result (\ref{integraleGeneraleSemiAssePositivoSoluzioneTipoA}) to obtain the integral on the negative half-line by performing the change of variable $y = -x$. Indeed, we have, with $ca_\theta\le0$,
\begin{align}
- \int_{-\infty}^0 & \dif x \int_0^\infty e^{-(-c)^{n-1}\frac{t^n}{n}-ctxa_\theta}\sin(ctxb_\theta - \theta)\dif t \nonumber\\
& = - \int_0^\infty  \dif y \int_0^\infty e^{-(-c)^{n-1}\frac{t^n}{n}-(-c)tya_\theta}\sin\bigl((-c)tyb_\theta - \theta\bigr)\dif t \nonumber\\
& = \begin{cases}\begin{array}{l l}
\frac{\pi-\theta}{c}, &c>0,\\
\frac{\theta}{-c}, & c<0,
\end{array}\end{cases} \label{integraleGeneraleSemiAsseNegativoSoluzioneTipoA}
\end{align}
where in the last step we used (\ref{integraleGeneraleSemiAssePositivoSoluzioneTipoA}) with $-c$.

If $a_\theta = 0$ (and therefore $\theta = \pm\pi/2$ and $b_\theta = \pm1$), then in light of (\ref{integraleGeneraleSemiAssePositivoSoluzioneTipoA}) and (\ref{integraleGeneraleSemiAsseNegativoSoluzioneTipoA}) we have the following integral on the real line
$$-\int_{-\infty}^\infty \dif x \int_0^\infty e^{-(-c)^{n-1}\frac{t^n}{n}}\sin\biggl(\pm<t< ctx \mp \frac{\pi}{2}\biggr)\dif t = \frac{\pi}{|c|},\ \ \ n \text{ odd},\ c\in\mathbb{R}\setminus\{0\}, \ \theta \in[0,\pi],$$
which does not depend on $n$ and $\theta$.

By setting $\theta = 2k\pi/n$ in (\ref{integraleGeneraleSemiAssePositivoSoluzioneTipoA}) we readily obtain the corresponding integral of (\ref{soluzioneEquazioneDifferenzialeGeneraleDifferenza}),
\begin{equation*}
\int_0^\infty y_k^-(x)\dif x = 
\begin{cases}\begin{array}{l l}
\frac{n-2k}{-nc}\pi, &c<0,\\
\frac{2k}{nc}\pi, & c>0.
\end{array}\end{cases}
\end{equation*}
\hfill$\diamond$
\end{remark}

\begin{corollary}[Power series of the solutions]\label{corollarioSeriePotenzeSoluzioneEquazioneDifferenzialeGenerale}
The functions (\ref{soluzioneEquazioneDifferenzialeGeneraleSomma}) and (\ref{soluzioneEquazioneDifferenzialeGeneraleDifferenza}) ($n$ odd or $n$ even and $c<0$) admit the following power series representation, with $x\in\mathbb{C}$ and natural $k \le n/2$,
\begin{align}
&y_k^+ (x) = \sum_{j=0}^\infty \frac{(-\text{sgn}(c)x)^j}{j!} \Biggl[1-\cos\Bigl((j+1)\frac{2k\pi}{n} \Bigr) \Biggr]\Gamma\Bigl(\frac{j+1}{n}\Bigr)(n|c|)^{\frac{j+1}{n}-1} , \label{serieSoluzioneEquazioneDifferenzialeGeneraleSomma} \\
&y_k^-(x) =  \sum_{j=0}^\infty \frac{(-\text{sgn}(c)x)^j}{j!} \sin\Bigl((j+1)\frac{2k\pi}{n} \Bigr) \Gamma\Bigl(\frac{j+1}{n}\Bigr)(n|c|)^{\frac{j+1}{n}-1}. \label{serieSoluzioneEquazioneDifferenzialeGeneraleDifferenza}
\end{align}
The functions (\ref{soluzioneEquazioneDifferenzialeGeneraleSommaPari}) and (\ref{soluzioneEquazioneDifferenzialeGeneraleDifferenzaPari}) ($n$ even and $c>0$) admit the following power series representation, with $x\in\mathbb{C}$,
\begin{align}
&y_k^+ (x) = \sum_{j=0}^\infty  \frac{(-x)^j}{j!} \Biggl[e^{i\frac{\pi}{n}(j+1)}-\cos\Bigl((j+1)\frac{(2k+1)\pi}{n} \Bigr) \Biggr]\Gamma\Bigl(\frac{j+1}{n}\Bigr)(nc)^{\frac{j+1}{n}-1}, \ \, k=1,\dots,\frac{n}{2}-1, \label{serieSoluzioneEquazioneDifferenzialeGeneraleSommaPari} \\
&y_k^-(x) = \sum_{j=0}^\infty \frac{(-x)^j}{j!} \sin\Bigl((j+1)\frac{(2k+1)\pi}{n} \Bigr) \Gamma\Bigl(\frac{j+1}{n}\Bigr)(nc)^{\frac{j+1}{n}-1}, \ \ \ k=0,\dots,\frac{n}{2}-1. \label{serieSoluzioneEquazioneDifferenzialeGeneraleDifferenzaPari}
\end{align}
\end{corollary}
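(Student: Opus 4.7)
The strategy is to Taylor-expand the integrands of (\ref{soluzioneEquazioneDifferenzialeGeneraleSomma})--(\ref{soluzioneEquazioneDifferenzialeGeneraleDifferenzaPari}) in powers of $x$, justify the termwise integration, and evaluate the resulting moment integral via the Gamma function. First, in the case $n$ odd or ($n$ even and $c<0$), I rewrite the trigonometric weights as complex exponentials. Using the identity $a_k+ib_k=e^{i2k\pi/n}$, I observe that
$$e^{-cwxa_k}\cos\Bigl(cwxb_k-\tfrac{2k\pi}{n}\Bigr)=\Re\!\left[e^{-cwx\,e^{-i2k\pi/n}}\,e^{-i2k\pi/n}\right],$$
and analogously for the sine with $\Im$ in place of $\Re$. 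Expanding the complex exponential as $\sum_{j\ge 0}(-cwx)^j e^{-ij2k\pi/n}/j!$ and multiplying by the outer phase $e^{-i2k\pi/n}$ before taking real/imaginary parts yields
$$e^{-cwxa_k}\cos\Bigl(cwxb_k-\tfrac{2k\pi}{n}\Bigr)=\sum_{j\ge0}\frac{(-cwx)^j}{j!}\cos\!\Bigl((j+1)\tfrac{2k\pi}{n}\Bigr),$$
and the analogue with $\sin$. Subtracting from the expansion of $e^{-cwx}$ produces the integrands of $y_k^+$ and $y_k^-$ as single power series in $x$ with $w$-dependent coefficients $w^j\bigl[1-\cos((j+1)2k\pi/n)\bigr]$ and $w^j\sin((j+1)2k\pi/n)$, respectively.

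Second, I exchange the series and the integral. This is justified by Fubini/Tonelli, since termwise absolute integrability follows from the bound $\sum_{j}|cwx|^j/j!\cdot e^{-|c|^{n-1}w^n/n}=e^{|cwx|-|c|^{n-1}w^n/n}$, which is integrable on $(0,\infty)$ because $n\ge 2$ ensures the $w^n$-term dominates. Third, I compute the moments
$$I_j:=\int_0^\infty w^j\,e^{-|c|^{n-1}w^n/n}\,\dif w=\frac{1}{n}\Bigl(\tfrac{n}{|c|^{n-1}}\Bigr)^{(j+1)/n}\Gamma\!\Bigl(\tfrac{j+1}{n}\Bigr)$$
by the substitution $u=|c|^{n-1}w^n/n$. (Recall that under the hypotheses of the first case $(-c)^{n-1}=|c|^{n-1}$.) Combining the factor $(-cx)^j=(-\mathrm{sgn}(c)\,x)^j|c|^j$ with the $|c|^{-(n-1)(j+1)/n}$ coming from $I_j$ produces $|c|$ to the power $j-(n-1)(j+1)/n=(j+1)/n-1$, which then collects with the $n^{(j+1)/n-1}$ factor of $I_j$ into $(n|c|)^{(j+1)/n-1}$. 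This is exactly the pattern displayed in (\ref{serieSoluzioneEquazioneDifferenzialeGeneraleSomma})--(\ref{serieSoluzioneEquazioneDifferenzialeGeneraleDifferenza}).

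For the even--$c>0$ case, the same scheme applies with two cosmetic modifications: the natural phase is $e^{i(2k+1)\pi/n}$ rather than $e^{i2k\pi/n}$, so the trigonometric part produces the factor $\cos((j+1)(2k+1)\pi/n)$ or $\sin((j+1)(2k+1)\pi/n)$; and the leading term of (\ref{soluzioneEquazioneDifferenzialeGeneraleSommaPari}) is $e^{i\pi/n}e^{-cwx\,e^{i\pi/n}}$, whose expansion contributes $\sum_j(-cwx)^j e^{i\pi(j+1)/n}/j!$, explaining the $e^{i\pi(j+1)/n}$ term appearing in (\ref{serieSoluzioneEquazioneDifferenzialeGeneraleSommaPari}) in place of the $1$ found in (\ref{serieSoluzioneEquazioneDifferenzialeGeneraleSomma}). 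Since $c>0$ here, one has $c^{n-1}=|c|^{n-1}$, so the moment computation and the exponent simplification are identical.

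The hard part is purely bookkeeping: verifying the algebraic identity $j-(n-1)(j+1)/n=(j+1)/n-1$ and tracking the signs of $c$ so as to collapse the many factors involving $c^j$, $|c|^{n-1}$ and $n^{(j+1)/n}$ into the compact form $(n|c|)^{(j+1)/n-1}$ (respectively $(nc)^{(j+1)/n-1}$ in the even, $c>0$ case) together with the correct $(-\mathrm{sgn}(c)x)^j$ or $(-x)^j$ prefactor. Everything else --- the complex-exponential repackaging, the Fubini exchange, and the standard Gamma integral --- is routine.
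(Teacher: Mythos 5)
Your proposal is correct and follows essentially the same route as the paper: both reduce the claim to the Taylor expansion at $x=0$ and evaluate the coefficients via the Gamma integral $\int_0^\infty w^j e^{-|c|^{n-1}w^n/n}\,\dif w = \frac{1}{n}\bigl(\frac{n}{|c|^{n-1}}\bigr)^{(j+1)/n}\Gamma\bigl(\frac{j+1}{n}\bigr)$, with the same trigonometric bookkeeping giving the factors $\sin\bigl((j+1)\frac{2k\pi}{n}\bigr)$ and $1-\cos\bigl((j+1)\frac{2k\pi}{n}\bigr)$. The only (immaterial) difference is that the paper obtains the coefficients by differentiating under the integral sign and evaluating at $x=0$, whereas you expand the integrand as a series and invoke Fubini--Tonelli with the explicit dominating function $e^{|cwx|-|c|^{n-1}w^n/n}$, which if anything makes the interchange justification more explicit than the paper's.
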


Note that the coefficients of the power series in Corollary \ref{corollarioSeriePotenzeSoluzioneEquazioneDifferenzialeGenerale} satisfy a "cyclic" relationship which can be easily obtained from the differential equation (\ref{equazioneDifferenzialeGeneraleDerivate}) concerning the derivative functions. In fact, by setting $x = 0$ we have $f_n^{(n+m-1)}(0) = -c m f_n^{(m-1)} (0)$.
\\

We observe that the power series (\ref{serieSoluzioneEquazioneDifferenzialeGeneraleDifferenza}), coincides with the result (15) in \cite{MO2023} by setting $n\leftarrow 2n+1$, $k=n/2$ and $c = (-1)^n$. In particular, the case for $n$ odd is easy to show, while in the case of $n$ even one must perform some quite cumbersome calculations (we suggest to suitably modify the integral computation (18) and at the end of the proof of \cite{MO2023}).

\begin{proof}
We prove formula (\ref{serieSoluzioneEquazioneDifferenzialeGeneraleDifferenza}) concerning the case of $n$ odd or $n$ even and $c<0$. 

First of all we observe that the expression (\ref{soluzioneEquazioneDifferenzialeGeneraleDifferenza}) easily permits the interchange between the derivative and the integral, therefore we can write down the power series expansion centered in $x=0$. Therefore, we obtain the derivative of any order by means of some calculations, for $j\in\mathbb{N}$ and $a_k = \cos(2k\pi/n), b_ = \sin(2k\pi/n),$
\begin{equation}
\frac{\dif^j}{\dif x^j} y_k^-(x) = - (-c)^j \int_0^\infty t^j
 e^{-(-c)^{n-1}\frac{t^n}{n}-ctxa_k} \sin\Bigl(b_kctx - \frac{2k\pi}{n}(j+1)\Bigr)\dif t.
\end{equation}
We note that $(-c)^{n-1} = |c|^{n-1}\ge0$ because of the conditions on $n$ and $c$. Now, we determine the value of the derivative in $x=0$ by suitably applying the Euler's Gamma function.
\begin{align}\label{valoreDerivataSoluzioneInZero}
\frac{\dif^j}{\dif x^j} y_k^-(0) &= - (-c)^j \int_0^\infty t^j
 e^{-|c|^{n-1}\frac{t^n}{n}} \sin\Bigl(- \frac{2k\pi}{n}(j+1)\Bigr)\dif t \\
& = (-c)^j \sin\Bigl(- \frac{2k\pi}{n}(j+1)\Bigr)\int_0^\infty  \frac{\dif s}{n} \,s^{\frac{j}{n}-1}
 e^{-|c|^{n-1}\frac{s}{n}} \nonumber\\
&= \frac{(-c)^j}{n} \sin\Bigl(- \frac{2k\pi}{n}(j+1)\Bigr) \Gamma\Bigl(\frac{j+1}{n}\Bigr) \Bigl(\frac{|c|^{n-1}}{n}\Bigr)^{-\frac{j+1}{n}}\nonumber\\
&=\bigl(-\text{sgn}(c)\bigr)^j  (n|c|)^{\frac{j+1}{n}-1}\sin\Bigl(- \frac{2k\pi}{n}(j+1)\Bigr) \Gamma\Bigl(\frac{j+1}{n}\Bigr). \label{valoreDerivataSoluzioneInZero}
\end{align}
Thanks to the values in (\ref{valoreDerivataSoluzioneInZero}) we readily arrive at the power series representation (\ref{serieSoluzioneEquazioneDifferenzialeGeneraleDifferenza}).

We point out that the proof of the other power series in the statement follow the same arguments.
\end{proof}

\begin{remark}[Differential equation for the derivatives]\label{remarkEquazioneDerivateSoluzioni}
Equation (\ref{equazioneDifferenzialeGenerale}) has a very convenient form to study the differential equations of the derivatives of its solutions. Indeed, by deriving $m\in\mathbb{N}$ times, we obtain
\begin{align*}
0 =y^{(n+m-1)} + c\frac{\dif^m }{\dif x^m}(xy) = \Bigl(y^{(m)}\Bigr)^{(n-1)} + cx y^{(m)} + m c y^{(m-1)},
\end{align*}
and we readily arrive at the non-homogeneous differential equation
\begin{equation}\label{equazioneDifferenzialeGeneraleDerivate}
y^{(n-1)} + cxy = -m c f^{(m-1)}, \ \ \ x\in \mathbb{C}, \ \text{integers } m\ge0,n\ge 2,\ c\in \mathbb{R}\setminus{\{0\}},
\end{equation}
satisfied by the function $f^{(m)}$, with $f$ satisfying (\ref{equazioneDifferenzialeGenerale}). 
\hfill$\diamond$
\end{remark}

\begin{remark}[Scorer-type differential equations]
The results on the Airy-type functions also permit us to present solutions to the higher order Scorer-type differential equations \cite{S1950}. In detail, we prove that both terms in the right-hand side of (\ref{soluzioneEquazioneDifferenzialeGeneraleSomma}) and of (\ref{soluzioneEquazioneDifferenzialeGeneraleSommaPari}) satisfy the equation
\begin{equation}\label{equazioneIperScorer}
y^{(n-1)} + cxy = 1, \ \ \ x\in\mathbb{C},
\end{equation}
which in the case of $n = 3$ and $c = -1$ reduces to the classical Scorer equation.

The previous result, in the case of function (\ref{soluzioneEquazioneDifferenzialeGeneraleSomma}), follows by proving that the first term of the right-hand side of (\ref{soluzioneEquazioneDifferenzialeGeneraleSomma}) satisfies the non-homogeneous differential equation (\ref{equazioneIperScorer}). Indeed, for this term we can exchange the derivative and the integral easily showing that 
\begin{align*}
\Bigl( \frac{\dif^{n-1}}{\dif x^{n-1}} +cx \Bigr)&\int_0^\infty e^{-(-c)^{n-1}\frac{w^n}{n}-cwx} \dif w \nonumber\\
& = \int_0^\infty \bigl((-cw)^{n-1} + cx\bigr) e^{-(-c)^{n-1}\frac{w^n}{n}-cwx} \dif w \nonumber\\
& = -e^{-(-c)^{n-1}\frac{w^n}{n}-cwx} \Big|_0^\infty  = 1,
\end{align*}
where we need to keep in mind that $n$ is odd or $n$ is even and $c<0$. A similar argument can be used also for the case of the elements in the right-hand side of function (\ref{soluzioneEquazioneDifferenzialeGeneraleSommaPari}) regarding the case $n$ even and $c>0$.

Hence, the following functions satisfy the higher-order Scorer-type differential equation (\ref{equazioneIperScorer}) 
\begin{itemize}
\item if $n$ odd or $n$ even and $c<0$,
\begin{align}
&f(x) = \int_0^\infty e^{-(-c)^{n-1}\frac{w^n}{n}-cwx} \dif w,\\
& g_k(x) = \int_0^\infty e^{-(-c)^{n-1}\frac{w^n}{n}-cwxa_k}  \cos\Bigl( cwx b_k - \frac{2k\pi}{n} \Bigr)\dif w,\label{casoAiryScorer}
\end{align}
with natural $k \le n/2$ and $a_k = \cos(2k\pi/n)$ and $b_k = \sin(2k\pi/n)$;
\item if $n$ even and $c>0$,
\begin{align}
&f(x) = e^{i\frac{\pi}{n}}\int_0^\infty e^{(-c)^{n-1}\frac{w^n}{n}-cwxe^{i\pi/n}} \dif w, \ \ \ k=1,\dots,\frac{n}{2}-1\\
&g_k(x) = \int_0^\infty e^{(-c)^{n-1}\frac{w^n}{n}-cwxa_k}  \cos\Bigl( cwx b_k - \frac{2k+1}{n}\pi \Bigr)\dif w,k=0,\dots,\frac{n}{2}-1,
\end{align}
with $a_k = \cos\bigl((2k+1)\pi/n\bigr)$ and $b_k = \sin\bigl((2k+1)\pi/n\bigr)$.
\end{itemize}

We point out that in the case of (\ref{equazioneIperScorer}) with $n=3$ and $c=-1$ we obtain the original Scorer's equation (disregarding the factor $1/\pi$) and its corresponding solution (for $k=1$) which can be written as
\begin{align*}
g_1(x) &=  \int_0^\infty e^{-\frac{w^3}{3}+\frac{wx}{2}} \cos\Bigl(\frac{\sqrt{3}}{2}wx + \frac{2\pi}{3}\Bigr) \dif w\\
& = \int_0^\infty \sin\Bigl(wx + \frac{w^3}{3}\Bigr)\dif w, 
\end{align*}
where the second equality follows by considering that, in light of Remark \ref{remarkFormaSoluzioniTipoAiry}, function (\ref{casoAiryScorer}) coincides with the second term of function (\ref{soluzioneEquazioneDifferenzialeGeneraleSommaBordoInferioreB}) with $n=3,k=1, c=-1$ (and also (\ref{soluzioneEquazioneDifferenzialeGeneraleSommaBordoSuperioreB}),) when the integrals converge.
\hfill$\diamond$

We point out that, in light of Remark \ref{remarkFormaSoluzioniTipoAiry}, function (\ref{casoAiryScorer}) coincides with the second term of functions (\ref{soluzioneEquazioneDifferenzialeGeneraleSommaBordoSuperioreB}) and (\ref{soluzioneEquazioneDifferenzialeGeneraleSommaBordoInferioreB}) (when the integrals converge). From the latter, if 
$$ g_1(x) =  -  \int_0^\infty  e^{-cwxa^-_k}\cos\Bigl( cwx b^-_k -(-c)^{n-1}\frac{w^n}{n} - \frac{4k-1}{2n}\pi \Bigr) \dif w = \int_0^\infty \sin\Bigl(wx + \frac{w^3}{3}\Bigr)\dif w, $$
where in the last equality we performed the required substitutions and some calculation.
\hfill$\diamond$
\end{remark}

\begin{remark}
We note that for the classical Airy second-order equation (i.e. (\ref{equazioneDifferenzialeGenerale}) with $n=3$ and $c = -1$), the three possible products of $\text{Ai}$ and $\text{Bi}$ (meaning $\text{Ai}^2, \text{Bi}^2$ and $\text{Ai}\text{Bi}$, see (\ref{funzioniAiryClassiche})) are the linearly independent solutions of the third-order differential equation
$$ y^{(3)} - 4xy' -2y = 0.$$

Furthermore, in the case of the product of three and four functions (denoted by $f_j$) solving the classical Airy functions we have the following differential equations,
\begin{align*}
&y = f_1f_2f_3 \ \ \text{ satisfies }\ \  y^{(4)} = - 9x^2y +10 y' + 10x y'',\\
&y = f_1f_2f_3f_4\ \  \text{ satisfies }\ \  y^{(5)} = -64xy -64x^2 y' + 30x y'' + 20xy'''.
\end{align*}
With the above equations at hand, one can derive the other cases. However, we have not been able to obtain a nice general differential equation for the product of $n$ solutions to the classical Airy functions, but it is clear that it must be of order $n+1$, which is the number of possible combinations with repetitions of $2$ elements (the number of linearly independent solutions to the Airy function) in $n$ places.

We refer to the series of papers of Reid \cite{R1995, R1997, R1997b} for some explicit form of the products involving the classical Airy functions and \cite{V2010} for further results.
\\

We make similar considerations for further cases as well. For instance, in the case of equation (\ref{equazioneDifferenzialeGenerale}) with $n=4$ and $c = -1$, the products of two possible solutions satisfies the following sixth-order equation
$$xy^{(6)} -y^{(5)} -7x^2y^{(3)} -7xy^{(2)}+7y'-8x^3y = 0$$
and these products are its linear independent solutions.

In general, we conjecture that the products of the solutions of the equation (\ref{equazioneDifferenzialeGenerale}) are the independent solutions to a differential equation of order $\binom{n}{2} = n(n-1)/2$, which is equal to the number of the possible products of the $n-1$ independent solutions.
\hfill$\diamond$
\end{remark}

We conclude this section by studying the differential equation governing the convolution of the solutions to (\ref{equazioneDifferenzialeGenerale}).
\begin{proposition}
Let $f_{n}$ be a solution to equation (\ref{equazioneDifferenzialeGeneraleIntro}) with $c = c_n$. Then, the (real) convolution $g = \Conv_{j=1}^N f_{n_j} : \mathbb{R}\longrightarrow\mathbb{R}$ satisfies
\begin{equation}\label{equazioneConvoluzioneGenerale}
\sum_{j=1}^N y^{(n_j - 1)} \prod_{h=1 h\not=j}^N c_{n_h} + x y \prod_{j=1}^N c_{n_j} = 0.
\end{equation}
\end{proposition}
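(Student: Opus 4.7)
The plan is to prove the identity by combining two elementary operational rules for convolutions with the defining differential equation of each $f_{n_j}$, taken from Theorem \ref{teoremaSoluzioneEquazioneDifferenzialeGenerale}. Throughout I will assume that the convolution $g$ and all the derivatives and integrals appearing below exist in a classical sense, since this is precisely the hypothesis under which the statement is meaningful.

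First I would recall the two basic facts. Fact one: because convolution commutes with differentiation on one factor at a time, for every $j$ one has
\begin{equation*}
g^{(n_j-1)} = f_{n_1} \Conv \cdots \Conv f_{n_j}^{(n_j-1)} \Conv \cdots \Conv f_{n_N},
\end{equation*}
the derivative falling on the $j$-th factor. Fact two: from the trivial decomposition $x = y + (x-y)$ inside the convolution integral one obtains the Leibniz-type rule
\begin{equation*}
x(f \Conv h)(x) = \bigl((yf(y)) \Conv h\bigr)(x) + \bigl(f \Conv (yh(y))\bigr)(x),
\end{equation*}
which, by an immediate induction on $N$ (using commutativity and associativity of convolution), extends to
\begin{equation*}
x\, g(x) = \sum_{j=1}^{N} f_{n_1} \Conv \cdots \Conv \bigl(y f_{n_j}(y)\bigr) \Conv \cdots \Conv f_{n_N}.
\end{equation*}

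Next I would invoke the hypothesis that $f_{n_j}$ solves (\ref{equazioneDifferenzialeGeneraleIntro}) with $c = c_{n_j}$, i.e.\ $f_{n_j}^{(n_j-1)}(y) + c_{n_j}\, y\, f_{n_j}(y) = 0$, so that $y f_{n_j}(y) = -c_{n_j}^{-1} f_{n_j}^{(n_j-1)}(y)$. Substituting this into the previous display and applying Fact one term by term gives
\begin{equation*}
x\, g(x) = -\sum_{j=1}^{N} \frac{1}{c_{n_j}}\, g^{(n_j-1)}(x).
\end{equation*}
Multiplying both sides by $\prod_{k=1}^{N} c_{n_k}$ clears the denominators and yields exactly (\ref{equazioneConvoluzioneGenerale}).

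The argument is essentially algebraic, so the only real obstacle is analytic: one must be allowed to differentiate under the convolution integral and to split $x = y+(x-y)$ inside it. Under the standing assumption that $g$ and its derivatives up to order $\max_j n_j - 1$ are well-defined real functions, and that each $y f_{n_j}(y)$ is integrable against the other factors, both manipulations are routine applications of Fubini and dominated convergence, so no further technicalities arise.
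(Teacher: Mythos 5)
Your proof is correct and follows essentially the same route as the paper's: both rest on letting the derivative fall on a single convolution factor, splitting $x=(x-w)+w$ inside the convolution integral, and using the defining equation $f_{n_j}^{(n_j-1)}(y)=-c_{n_j}\,y f_{n_j}(y)$ to convert each term. The only difference is organizational — the paper carries out the computation explicitly for $N=2$ starting from $g^{(n_1-1)}$ and appeals to induction for general $N$, whereas you decompose $x\,g(x)$ directly for arbitrary $N$, which is marginally cleaner.
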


\begin{proof}
We prove the statement for $N=2$. Let us use the notation $n=n_1, m=n_2$ and $g = f_n\ast f_m$, then
\begin{align}
\frac{\dif^{n-1}}{\dif x^{n-1}}  \int_{-\infty}^\infty f_{n}(x-w) f_m(w)\dif w &= -c_n \int_{-\infty}^\infty (x-w)f_n(x-w)f_m(w)\dif w  \nonumber\\
& = -c_n x f_n\ast f_m(x)+ c_n \int_{-\infty}^\infty w f_n(x-w)f_m(w) \dif w\nonumber\\
& = -c_n x g+ c_n \int_{-\infty}^\infty (x-z) f_n(z) f_m(x-z) \dif z  \label{cambioVariabileConvoluzione} \\
& = -c_n x g-\frac{c_n}{c_m} \frac{\dif^{m-1}}{\dif x^{m-1}} g, \nonumber
\end{align}
where in the first step, after exchanging derivative and integral, we used that $f_n$ satisfies (\ref{equazioneDifferenzialeGeneraleIntro}); in step (\ref{cambioVariabileConvoluzione}) we used the change of variable $z = x-w$ and in the last step we considered that $f_m$ satisfies (\ref{equazioneDifferenzialeGeneraleIntro}) and exchanged again derivative and integral.
By rearranging the above relationship we obtain the corresponding case of (\ref{equazioneConvoluzioneGenerale}) for $N=2$,
$$ c_m y^{(n-1)} + c_n y^{(m-1)} +c_n c_m y = 0.$$

The statement can be proved for a general $N$ by induction. The steps coincide with those above so they are omitted.
\end{proof}

Similarly to Remark \ref{remarkEquazioneDerivateSoluzioni}, the $m$-th derivative, $g^{(m)}$, of the convolution satisfy the non-homogeneous differential equation
\begin{equation}\label{equazioneConvoluzioneGenerale}
\sum_{j=1}^N y^{(n_j - 1)} \prod_{\substack{h=1 \\h\not=j}}^N c_{n_h} + x y \prod_{j=1}^N c_{n_j} = - mg^{(m-1)}\prod_{j=1}^N c_{n_j}.
\end{equation}

In the next section we show that the following generalization of the hyper-Airy function is solution to equation (\ref{equazioneConvoluzioneGenerale}), $N\in \mathbb{N}$ and $\alpha_1, \dots,\alpha_N>1$,
\begin{equation}\label{generalizzazioneFunzioneAiry}
\mathcal{A}(x) = \int_0^\infty \cos\Biggl(xw + \sum_{k=1}^N \frac{a_k w^{\alpha_k}}{\alpha_k} \Biggr)\dif w, \ \ \ x\in\mathbb{C},
\end{equation}
with $a_1,\dots,a_N\not=0$.

\section{Heat-type equations and pseudo-processes}

By following the line of previous papers, see for instance \cite{CO2024b, MO2023, OT2014}, we consider the family of fractional-order pseudo-processes related to the generalization of higher-order heat-type equations. We refer also to \cite{BKOS2007} and references therein for further details on higher-order heat-equations.

First of all, we recall that, as highlighted by Askari and Ansari \cite{AA2020}, the hyper-Airy functions (\ref{funzioneAiryReale}) (with $\alpha = 2m+1,\ n\in\mathbb{N}$) are related to the following heat-type differential problem.
\begin{equation}\label{problemaDifferenzialeIperAiryIntere}
\begin{cases}
\frac{\partial u}{\partial t}(t,x) = (-1)^n \frac{\partial^{2m+1} u}{\partial x^{2m+1}} (t,x), \ \ \ t\ge0, \ x\in \mathbb{R},\\
u(0,x) = \delta(x), \ \ \forall\ x,\\
\lim_{x\rightarrow \pm \infty} u(t,x) = 0, \ \ \forall\ t.
\end{cases}
\end{equation}
In particular, the solution to problem (\ref{problemaDifferenzialeIperAiryIntere}) is 
\begin{equation}\label{soluzioneProblemDifferenzialeParzialeIperAiryIntere}
u_{2m+1}(t, x) = \frac{1}{\sqrt[2m+1]{(2m+1)t}} A_{2m+1}\Bigl(\frac{x}{\sqrt[2m+1]{(2m+1) t}}\Bigr).
\end{equation}

Marchione and Orsingher \cite{MO2023} then extended the above idea proving that real valued hyper-Airy function (\ref{soluzioneProblemDifferenzialeParzialeIperAiryReale}) satisfies the fractional differential problem (\ref{problemaParzialeCauchyAiryRealeIntro}) and then they related it to the theory of pseudo-processes. In particular, function (\ref{soluzioneProblemDifferenzialeParzialeIperAiryReale}) is interpreted as the pseudo-density, $u_\alpha$, of the position of a fractional pseudo-process $X_\alpha = \big\{X_{\alpha}(t)\big\}_{t\ge0}$ of order $\alpha>1$, having $x$-Fourier transform of the density equal to
\begin{equation}\label{trasformataFourierPseudoProcesso}
\int_{-\infty}^\infty e^{i\gamma x} u_\alpha(t,x) \dif x = e^{-i\,\text{sgn}(\gamma) |\gamma|^\alpha t},\ \ \ t\ge0,\ \gamma \in\mathbb{R}. 
\end{equation}
We refer to \cite{CO2024b, H1978, K1960, L2003, L2007, MO2023} (and references therein) for details on the construction of the pseudo-processes (signed measure); also see \cite{ODo2012} for further results on the probabilistic representation of the solutions of higher-order heat-type equations.

Our next statement generalizes the results above.

\begin{theorem}\label{proposizioneConvoluzione}
Let $N\in\mathbb{N}$, non-even $\alpha_1,\dots,\alpha_N>1$, 
\begin{equation}\label{coefficienteBetaProblemaDifferenzialeParzialeTotale}
\beta_k = 
\begin{cases}
\begin{array}{l l}
(-1)^n, & \alpha_k = 2m+1,\,n\in\mathbb{N}\\
1, & \alpha_k \not\in\mathbb{N},
\end{array}
\end{cases}
\end{equation}
and real $ a_1,\dots,a_N$. 
The fractional differential problem 
\begin{equation}\label{problemaDifferenzialeSommaOperatoriRiesz}
\begin{cases}
\frac{\partial u}{\partial t} (t,x)= \sum_{k=1}^N a_k D_x^{\alpha_k} u(t,x),\ \ \ t\ge0,\ x\in\mathbb{R}, \\
u(0, x) = \delta(x),\ \ \forall\ x,\\
\lim_{x\rightarrow\pm\infty}u(t,x) = 0,\ \ \forall\ t,
\end{cases}
\end{equation}
has solution
\begin{equation}\label{soluzioneProblemaDerviateParzialiGenerale}
u(t,x) = \frac{1}{\pi}\int_0^\infty \cos\Bigl(xw + t \sum_{k=1}^N \beta_k a_k w^{\alpha_k} \Bigr) \dif w = \Conv_{k=1}^N \ \ \frac{1}{\sqrt[\alpha_k\ ]{\beta_k a_k\alpha_k t}} A_{\alpha_k}\Biggl(\frac{x}{\sqrt[\alpha_k\ ]{\beta_k a_k\alpha_k t}}\Biggr),
\end{equation}
where $A_\alpha$ denotes the hyper-Airy function of order $\alpha>1$ defined in (\ref{funzioneAiryReale}).
\end{theorem}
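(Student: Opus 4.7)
The natural route is the Fourier transform in $x$, which diagonalizes every operator $D_x^{\alpha_k}$ and reduces the Cauchy problem to an ODE in $t$ that can be solved by inspection. The plan is to first unify the action of $D_x^{\alpha_k}$ on the Fourier side across the integer and non-integer cases: by the definition in (\ref{trasformataFourierOperatoreRiesz}), for non-integer $\alpha_k$ we have $\mathcal{F}(D_x^{\alpha_k}u)(\gamma) = -i\,\text{sgn}(\gamma)|\gamma|^{\alpha_k}\mathcal{F}u(\gamma)$, while for $\alpha_k = 2m+1$ a short computation gives $(-i\gamma)^{2m+1} = -i(-1)^{m}\text{sgn}(\gamma)|\gamma|^{2m+1}$. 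The coefficient $\beta_k$ in (\ref{coefficienteBetaProblemaDifferenzialeParzialeTotale}) is precisely what is needed so that in both cases $\mathcal{F}(D_x^{\alpha_k}u)(\gamma) = -i\beta_k\,\text{sgn}(\gamma)|\gamma|^{\alpha_k}\mathcal{F}u(\gamma)$.

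Applying $\mathcal{F}$ to (\ref{problemaDifferenzialeSommaOperatoriRiesz}) and using $\mathcal{F}\delta \equiv 1$ turns the problem into
\begin{equation*}
\frac{\partial \hat u}{\partial t}(t,\gamma) = -i\,\text{sgn}(\gamma)\Bigl(\sum_{k=1}^N a_k\beta_k|\gamma|^{\alpha_k}\Bigr)\hat u(t,\gamma), \qquad \hat u(0,\gamma)=1,
\end{equation*}
whose unique solution is $\hat u(t,\gamma) = \exp\bigl(-i\,\text{sgn}(\gamma)\,t\sum_k\beta_k a_k |\gamma|^{\alpha_k}\bigr)$. I would then recover $u$ by inverting the Fourier transform: splitting the integral over $\mathbb{R}$ into $\gamma>0$ and $\gamma<0$ pieces, changing variables $\gamma\mapsto -w$ in the latter, and combining conjugate exponentials yields
\begin{equation*}
u(t,x)=\frac{1}{2\pi}\int_{-\infty}^{\infty}e^{-i\gamma x}\hat u(t,\gamma)\,\dif\gamma = \frac{1}{\pi}\int_{0}^{\infty}\cos\Bigl(xw + t\sum_{k=1}^{N}\beta_k a_k w^{\alpha_k}\Bigr)\dif w,
\end{equation*}
which is the first equality in (\ref{soluzioneProblemaDerviateParzialiGenerale}).

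For the convolution representation, the key observation is that $\mathcal{F}$ sends convolution to product. From (\ref{soluzioneProblemDifferenzialeParzialeIperAiryReale}) and the scaling property of $A_{\alpha_k}$, a direct check (or the case $N=1$ of the argument above with $a_1\beta_1=1$ and time $\beta_k a_k t$) gives
\begin{equation*}
\mathcal{F}\Bigl[\tfrac{1}{\sqrt[\alpha_k]{\beta_k a_k\alpha_k t}}A_{\alpha_k}\bigl(\cdot/\sqrt[\alpha_k]{\beta_k a_k\alpha_k t}\bigr)\Bigr](\gamma) = \exp\bigl(-i\,\text{sgn}(\gamma)\beta_k a_k t|\gamma|^{\alpha_k}\bigr).
\end{equation*}
Multiplying these $N$ factors reproduces $\hat u(t,\gamma)$ exactly, so by the injectivity of the Fourier transform the two expressions in (\ref{soluzioneProblemaDerviateParzialiGenerale}) coincide.

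The main obstacle is analytic rather than algebraic: since the $\hat u(t,\cdot)$ in question are bounded but not integrable, and the pseudo-densities $u$ are sign-changing oscillatory integrals (not $L^1$ in general), every step involving inversion, Fubini, or convolution must be interpreted in an appropriate distributional or oscillatory-integral sense, as is standard in the pseudo-process literature cited in the paper. I would simply invoke that framework (as in \cite{MO2023, CO2024b}) rather than re-verifying convergence for each $(\alpha_1,\dots,\alpha_N)$, noting in particular that the convolutions are well defined because each factor decays and oscillates fast enough away from the origin thanks to $\alpha_k>1$.
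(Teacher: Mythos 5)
Your proposal is correct and follows essentially the same route as the paper: apply the $x$-Fourier transform to reduce the problem to a first-order ODE in $t$, invert to obtain the cosine integral, and factor the resulting exponential as a product of the individual transforms to get the convolution representation. Your extra remarks on unifying the integer and non-integer cases via $\beta_k$ and on the distributional interpretation of the inversion are consistent with (and slightly more explicit than) the paper's argument.
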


\begin{proof}
By keeping in mind definition (\ref{trasformataFourierOperatoreRiesz}), we apply the $x$-Fourier transform to problem (\ref{problemaDifferenzialeSommaOperatoriRiesz}),
\begin{equation}\label{trasformataProblemaDifferenzialeSommaOperatoriRiesz}
\begin{cases}
\frac{\partial \mathcal{F}u}{\partial t}(t,\gamma) = -i\,\text{sgn}(\gamma)\sum_{k=1}^N a_k \beta_k|\gamma|^{\alpha_k}\, \mathcal{F}u(t,\gamma),\ \ \ t\ge0,\ x\in\mathbb{R}, \\
\mathcal{F}u(0, \gamma) = 1,\ \ \forall\ \gamma.
\end{cases}
\end{equation}
Now, the solution of (\ref{trasformataProblemaDifferenzialeSommaOperatoriRiesz}) reads
\begin{equation}\label{trasformataFourierSoluzioneProblemaDifferenzialeGenerale}
 \mathcal{F} u(t,\gamma) = e^{-it\,\text{sgn}(\gamma)\sum_{k=1}^N a_k\beta_k|\gamma|^{\alpha_k} }, \ \ \ t\ge0,\ \gamma\in\mathbb{R},
\end{equation}
and by applying the inverse-Fourier transform we achieve
\begin{align*}
 u(t,x) &= \frac{1}{2\pi}\int_{-\infty}^\infty e^{-i\gamma x} e^{-it\,\text{sgn}(\gamma)\sum_{k=1}^N a_k\beta_k|\gamma|^{\alpha_k}} \dif \gamma\\
&= \frac{1}{\pi} \int_{0}^\infty \cos\Bigl(\gamma x +it\,\text{sgn}(\gamma)\sum_{k=1}^N a_k\beta_k|\gamma|^{\alpha_k}\Bigr) \dif \gamma
\end{align*}
which coincides with the second term in (\ref{soluzioneProblemaDerviateParzialiGenerale}).

The last equality of (\ref{soluzioneProblemaDerviateParzialiGenerale}) derives by considering that the $x$-Fourier transform (\ref{trasformataFourierSoluzioneProblemaDifferenzialeGenerale}) can be written as
$$ \mathcal{F} u(t,\gamma) = \prod_{k=1}^N e^{-it\,\text{sgn}(\gamma) a_k\beta_k|\gamma|^{\alpha_k} } = \prod_{k=1}^N \mathcal{F}u_k(t,\gamma), $$ 
where each $u_k$ is the solution to problem (\ref{problemaDifferenzialeSommaOperatoriRiesz}) with $N=1$ and fractional order $\alpha_k$.
\end{proof}

We note that by assuming $a_k = 1/\beta_k =\beta_k$ formula (\ref{soluzioneProblemaDerviateParzialiGenerale}) turns into
\begin{equation}\label{iperAiryCasoSommaDiPseudoProcessi}
 \frac{1}{\pi}\int_0^\infty \cos\Biggl(xw + t \sum_{k=1}^N w^{\alpha_k} \Biggr) \dif w = \Conv_{k=1}^N \ \ \frac{1}{\sqrt[\alpha_k]{\alpha_k t}} A_{\alpha_k}\Bigl(\frac{x}{\sqrt[\alpha_k\ ]{\alpha_kt}}\Bigr). 
\end{equation}
Finally, for $t = 1$  and $\alpha_k = 2m_k +1$ with $n_1,\dots, n_N\in\mathbb{N}$ reads
$$  \frac{1}{\pi}\int_0^\infty \cos\Biggl(xw + \sum_{k=1}^N \frac{w^{2m_k+1}}{2m_k+1} \Biggr) \dif w = \Conv_{k=1}^N \ \ A_{2m_k+1}(x). $$
where the left-hand side represents a particular case of (\ref{generalizzazioneFunzioneAiry}) and the right-hand side satisfies equation (\ref{equazioneConvoluzioneGenerale}) with $c_{n_j} = 1\ \forall\ j$ (in fact, as already explained, the odd-order hyper-Airy function satisfies equation \ref{equazioneDifferenzialeGenerale}, see also (\ref{iperAirySoluzioneEquazioneGenerale})).
\\

As previously said, function (\ref{soluzioneProblemDifferenzialeParzialeIperAiryReale}) represents the pseudo-density of a pseudo-process $\big\{X_{\alpha}(t)\big\}_{t\ge0}$ of the type given in (\ref{trasformataFourierPseudoProcesso}). Therefore, the function (\ref{iperAiryCasoSommaDiPseudoProcessi}), denoting with $\alpha =(\alpha_1,\dots,\alpha_N)$, can be interpreted as the pseudo-density, $u_\alpha$, of the sum of independent pseudo-processes (in the sense that the $x$-Fourier transform of the density coincides with the product of the $x$-Fourier transforms of the densities) of different orders $Y_{N,\alpha} = \Big\{Y_{N,\alpha}(t) = \sum_{k=1}^N X_{\alpha_k}(t)\Big\}_{t\ge0}$, . The corresponding $x$-transform reads
$$ \int_{-\infty}^\infty e^{i\gamma x} u_\alpha(t,x) \dif x = e^{-i\,\text{sgn}(\gamma) t\sum_{k=1}^N |\gamma|^{\alpha_k}},\ \ \ t\ge0,\ \gamma \in\mathbb{R}. $$

\begin{remark}[Even order heat-type equations]\label{remarkCasoParzialePari}
By means of the arguments used in theorem \ref{proposizioneConvoluzione} and by keeping in mind the definition of the Riesz fractional operator (\ref{trasformataFourierOperatoreRiesz}), we can prove that in the case of $\alpha_k = 2m_k,\ k = 1,\dots, N$, assuming $(-1)^{m_k}a_k<0\ \forall\ k$, function 
\begin{equation}\label{soluzioneCasoDifferenzialePari}
u(t,x) = \frac{1}{\pi} \int_0^\infty \cos(\gamma x) e^{t\sum_{k=1}^N (-1)^{m_k} a_k \gamma^{2m_k}}\dif \gamma,\ \ \ t\ge0,\ x\in\mathbb{R},
\end{equation}
satisfies the problem (\ref{problemaDifferenzialeSommaOperatoriRiesz}). Along the proof, following the steps of the proof of Theorem \ref{proposizioneConvoluzione}, one also derive that the $x$-Fourier transform of the function (\ref{soluzioneCasoDifferenzialePari}) is 
\begin{equation}\label{trasformataFourierCasoDifferenzialePari}
\mathcal{F}u(t,\gamma) = e^{t\sum_{k=1}^N (-1)^{m_k} a_k \gamma^{2m_k}},\ \ \ t\ge0,\ \gamma\in\mathbb{R}.
\end{equation}

Note that if $a_k = (-1)^{m_k+1}\ \forall\ k,$ and $t=1$, then (\ref{soluzioneCasoDifferenzialePari}) reads 
\begin{equation}\label{soluzioneCasoDifferenzialePariSpeciale}
u(t,x) = \frac{1}{\pi} \int_0^\infty \cos(\gamma x) e^{-t\sum_{k=1}^N \gamma^{2m_k}}\dif \gamma =  \Conv_{k=1}^N \ g_{2m_k}(x), 
\end{equation}
where $g_{2m_k}$ satisfies equation (\ref{equazioneDifferenzialeGenerale}) with $n=2m_k$ and $c=(-1)^{m_k+1}$. Indeed, for $N=1$ we have that function (\ref{soluzioneCasoDifferenzialePariSpeciale}) satisfies the corresponding problem (\ref{problemaDifferenzialeSommaOperatoriRiesz}) (with $N=1$ and suitable $a_{m_1}$) and it coincides with function (\ref{caso4mDifferenza})  if $m_1$ is even and with function (\ref{caso4m+2Differenza}) if $m_1$ is odd. 
\hfill$\diamond$
\end{remark}

Form Theorem \ref{proposizioneConvoluzione} and Remark \ref{remarkCasoParzialePari} follows the next corollary.

\begin{corollary}
Let $N\in\mathbb{N}$ and real $\alpha_1,\dots,\alpha_N>1$. Let $\beta_k $ as defined in (\ref{coefficienteBetaProblemaDifferenzialeParzialeTotale}) and $I_e$ be the set of the indexes of the even $\alpha_k$. Also assume that $(-1)^{\alpha_{\bar {k}/2}} a_{\bar k}$ with $\bar k = \operatorname*{argmax}_{k\in I_e}\alpha_k$. Then, the solution to the differential problem (\ref{problemaDifferenzialeSommaOperatoriRiesz}) reads
\begin{equation}\label{soluzioneCasoGeneraleDifferenziale}
u(t,x) = \frac{1}{\pi} \int_0^\infty  e^{t\sum_{k\in I_e} (-1)^{\alpha_k/2} a_k \gamma^{\alpha_k}} \cos \Biggl( \gamma x + t\sum_{\substack{k=1\\ k\not\in I_e}}^N a_k\beta_k \gamma^{\alpha_k} \Biggr)\dif \gamma,\ \ \ t\ge0,\ x\in\mathbb{R}.
\end{equation}
\end{corollary}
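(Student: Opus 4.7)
The plan is to imitate the Fourier-transform argument already carried out in the proof of Theorem \ref{proposizioneConvoluzione} and in Remark \ref{remarkCasoParzialePari}, simply allowing the two types of fractional orders (even integer and all the others) to coexist. First I would apply the $x$-Fourier transform to problem (\ref{problemaDifferenzialeSommaOperatoriRiesz}). Using the definition (\ref{trasformataFourierOperatoreRiesz}) and splitting the sum according to whether $k\in I_e$ or not, the operator acts multiplicatively: on the non-integer and odd-integer modes we get the factor $-i\,\text{sgn}(\gamma)\beta_k|\gamma|^{\alpha_k}$, while on the even modes $\alpha_k=2m_k$ the Riesz operator reduces to the classical derivative, contributing $(-i\gamma)^{\alpha_k}=(-1)^{m_k}\gamma^{\alpha_k}=(-1)^{\alpha_k/2}\gamma^{\alpha_k}$. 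Thus the PDE becomes an elementary ODE in $t$ for each frequency $\gamma$.

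Solving this ODE with initial datum $\mathcal{F}u(0,\gamma)=1$, I would obtain
\begin{equation*}
\mathcal{F}u(t,\gamma)=\exp\Biggl(\,t\!\!\sum_{k\in I_e}(-1)^{\alpha_k/2}a_k\gamma^{\alpha_k}-it\,\text{sgn}(\gamma)\!\!\sum_{\substack{k=1\\k\notin I_e}}^{N}\!\!a_k\beta_k|\gamma|^{\alpha_k}\Biggr).
\end{equation*}
Then I would apply the inverse Fourier transform and exploit the symmetry of the integrand: the real (even-mode) exponent is even in $\gamma$, while the imaginary (non-even-mode) exponent is odd in $\gamma$ thanks to the $\text{sgn}(\gamma)$ factor. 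Consequently the imaginary part of the full integrand is odd in $\gamma$ and integrates to zero, and the remaining integral on $\mathbb{R}$ reduces to twice the integral on $(0,\infty)$ of the cosine of the imaginary exponent times the exponential of the real exponent; this is exactly (\ref{soluzioneCasoGeneraleDifferenziale}).

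The only delicate point is convergence of the final integral on $(0,\infty)$. The hypothesis $(-1)^{\alpha_{\bar k}/2}a_{\bar k}<0$ with $\bar k=\operatorname*{argmax}_{k\in I_e}\alpha_k$ guarantees that the leading even-mode contribution to $\sum_{k\in I_e}(-1)^{\alpha_k/2}a_k\gamma^{\alpha_k}$ is negative for large $\gamma$, so the exponential damping dominates every lower-order term and legitimizes every formal step (differentiation under the integral sign in verifying the PDE, and Fubini/Plancherel in the Fourier inversion). The non-even modes only contribute bounded oscillations to the integrand, so they do not affect convergence. This is the step that would deserve a few lines of care; the rest is a clean combination of Theorem \ref{proposizioneConvoluzione} (non-even part) with Remark \ref{remarkCasoParzialePari} (even part).
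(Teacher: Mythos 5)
Your proposal is correct and follows essentially the same route the paper intends: the paper gives no separate proof, merely noting that the corollary "follows" from Theorem \ref{proposizioneConvoluzione} and Remark \ref{remarkCasoParzialePari}, i.e.\ exactly the Fourier-transform computation with the sum split over $I_e$ and its complement that you describe. Your added observation that the hypothesis $(-1)^{\alpha_{\bar k}/2}a_{\bar k}<0$ on the leading even term is what secures convergence matches the paper's own comment immediately after the corollary.
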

Note that the condition on the negativity of the coefficient of the higher even exponent is sufficient for the convergence of the solution; requiring the negativity for all $k \in I_e$ would be needed in the case of some convolution representation.

Finally, we can also state the following result connecting the integer higher-order heat equations and the higher-order Airy equations (\ref{equazioneDifferenzialeGenerale}).

\begin{proposition}
Let $n\ge2$ and real $a\not =0$. Let $u:[0,\infty)\times\mathbb{R}\longrightarrow \mathbb{R}$ be the solution to the heat-type differential problem 
\begin{equation}\label{soluzioneProblemaCaloreLegameAiry}
\begin{cases}
\frac{\partial u}{\partial t} (t,x)=  a \frac{\partial^n u}{\partial x^n} u(t,x),\ \ \ t\ge0,\ x\in\mathbb{R}, \\
u(0, x) = \delta(x),\ \ \forall\ x,\\
\lim_{x\rightarrow\pm\infty}u(t,x) = 0,\ \ \forall\ t,
\end{cases}
\end{equation}
with the condition $(-1)^{n/2}a <0$ if $n$ even. Then, for fixed $t\ge0$, the single variable function $u(t,\cdot)$ satisfies the Airy-type equation (\ref{equazioneDifferenzialeGenerale}) with $c = \frac{1}{nta}$.
\end{proposition}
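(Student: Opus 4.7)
The plan is to verify the Airy-type equation in Fourier space, where it reduces to a one-line algebraic identity. The same Fourier argument used in the proof of Theorem \ref{proposizioneConvoluzione} (specialized to $N=1$, $\alpha_1 = n$ integer) gives, via (\ref{trasformataFourierOperatoreRiesz}), that the $x$-Fourier transform of the unique solution to (\ref{soluzioneProblemaCaloreLegameAiry}) is
\begin{equation*}
\hat u(t,\gamma) = e^{ta(-i\gamma)^n}, \qquad t\ge0,\ \gamma\in\mathbb{R},
\end{equation*}
where for $n$ even the hypothesis $(-1)^{n/2}a<0$ is exactly what makes the real part of $ta(-i\gamma)^n = ta(-1)^{n/2}\gamma^n$ nonpositive, so that $\hat u(t,\cdot)$ decays and is a bona fide $L^1$ function; for $n$ odd, $|\hat u|\equiv 1$ and the identities below are read as oscillatory integrals or distributionally.

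Next I would transform the claim $\partial_x^{n-1}u(t,x) + \frac{x}{nta}u(t,x) = 0$ into Fourier space. With the convention $\mathcal{F}f(\gamma) = \int e^{i\gamma x}f(x)\dif x$ used in (\ref{trasformataFourierOperatoreRiesz}), one has $\mathcal{F}(\partial_x^{n-1}f) = (-i\gamma)^{n-1}\mathcal{F}f$ and $\mathcal{F}(xf) = -i\,\partial_\gamma\mathcal{F}f$, so that the target ODE for $u(t,\cdot)$ is equivalent to
\begin{equation*}
(-i\gamma)^{n-1}\hat u(t,\gamma) \;-\; \frac{i}{nta}\,\partial_\gamma\hat u(t,\gamma) \;=\; 0.
\end{equation*}
A direct differentiation of the explicit form $\hat u = e^{ta(-i\gamma)^n}$ gives
\begin{equation*}
\partial_\gamma \hat u(t,\gamma) \;=\; -i\,n\,ta\,(-i\gamma)^{n-1}\,\hat u(t,\gamma),
\end{equation*}
and substituting this back into the previous display shows that it holds as a pointwise identity in $\gamma$. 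Inverting the Fourier transform then yields the stated Airy-type equation with $c = 1/(nta)$.

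The only delicate step — and the main obstacle — is justifying the inverse Fourier transform (equivalently, the integration by parts in $\gamma$ that underlies the identity $\mathcal{F}(xf) = -i\partial_\gamma\mathcal{F}f$) in the odd-$n$ case, where $|\hat u(t,\gamma)|\equiv 1$ and nothing is integrable in the classical sense. The standard fix is to insert a Gaussian regulator $e^{-\epsilon\gamma^2}$, carry out the argument for each $\epsilon>0$, and then pass to the limit $\epsilon\downarrow 0$, using the fact that $u(t,\cdot)$ is smooth for $t>0$ (as already seen from the explicit representation (\ref{soluzioneProblemaDerviateParzialiGenerale}) of Theorem \ref{proposizioneConvoluzione}) so that the identity survives as a classical, not merely distributional, ODE. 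Alternatively, one can bypass Fourier entirely and mirror the computation in $x$: differentiate (\ref{soluzioneProblemaDerviateParzialiGenerale}) under the integral to produce a factor $w^{n-1}$, rewrite it as a $\partial_w$-derivative of the exponential using $\partial_w e^{ta(-iw)^n} = -inta(-iw)^{n-1}e^{ta(-iw)^n}$, and integrate by parts with the same regularization — this avoids the $xf\leftrightarrow i\partial_\gamma$ duality but leads to the same conclusion.
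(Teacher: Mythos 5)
Your proposal is correct, but it takes a genuinely different route from the paper. You verify the Airy-type ODE directly in Fourier space: writing $\hat u(t,\gamma)=e^{ta(-i\gamma)^n}$ and using the dualities $\mathcal{F}(\partial_x^{n-1}f)=(-i\gamma)^{n-1}\mathcal{F}f$ and $\mathcal{F}(xf)=-i\,\partial_\gamma\mathcal{F}f$, the claim collapses to the one-line identity $\partial_\gamma\hat u=-i\,nta\,(-i\gamma)^{n-1}\hat u$, whose algebra checks out; you then correctly flag that the delicate point is the inversion (and the integration by parts behind $\mathcal{F}(xf)=-i\partial_\gamma\mathcal{F}f$) when $n$ is odd and $|\hat u|\equiv1$, and you sketch a standard regularization to handle it. The paper instead never differentiates the Fourier transform: it writes the explicit real integral representation $u(t,x)=\frac{1}{\pi}\int_0^\infty\cos(xw+taw^n)\,\mathrm{d}w$ from Theorem \ref{proposizioneConvoluzione}, takes the already-established solution (\ref{iperAirySoluzioneEquazioneGenerale}) (resp. (\ref{iperAirySoluzioneEquazioneGeneraleCasoDispari}), (\ref{caso4mDifferenza}), (\ref{caso4m+2Differenza}) depending on the parity of $n$ and of $m$) of the Airy-type equation with $c=1/(nta)$, and shows by the change of variable $w/(nt|a|)=\gamma$ that the two functions agree up to the constant factor $nt|a|\pi$, which suffices since the ODE is linear and homogeneous. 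What your approach buys is uniformity (one computation for all $n$, with no case analysis on $m$ even/odd) and a transparent explanation of where $c=1/(nta)$ comes from; what it costs is precisely the analytic justification you identify, whereas the paper's matching argument inherits all convergence and well-posedness issues from the results of Section 2 and Theorem \ref{proposizioneConvoluzione}, at the price of a parity case split and of having those explicit solutions already in hand. Your second suggested alternative (rewriting $w^{n-1}$ under the integral as a $\partial_w$-derivative and integrating by parts) is essentially a third, also valid, route that is closer in spirit to the Scorer-type computation the paper performs elsewhere.
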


\begin{proof}
We prove the statement for $n=2m+1$ with $m$ even. From (\ref{soluzioneProblemaDerviateParzialiGenerale}) and (\ref{coefficienteBetaProblemaDifferenzialeParzialeTotale}) we can readily write the solution $u$ to problem (\ref{soluzioneProblemaCaloreLegameAiry}), $t\ge0$ and $x\in\mathbb{R}$,
\begin{equation} 
u(t,x) = \frac{1}{\pi}\int_0^\infty \cos\Bigl(xw + t a w^{n} \Bigr) \dif w.
\end{equation}
Now, fix $t>0$, we consider the solution to (\ref{equazioneDifferenzialeGenerale}), with $c = \frac{1}{nta}$, given in (\ref{iperAirySoluzioneEquazioneGenerale}), that is, for $x\in\mathbb{R}$,
\begin{align}
y(x) &= \int_0^\infty \cos \Biggl(\frac{wx}{nta} + \biggl(\frac{1}{nta}\biggr)^{n-1} \frac{w^n}{n} \Biggr) \dif w  \nonumber\\
& = nt|a|\int_0^\infty \cos \Biggl(\frac{\gamma x}{\text{sgn}(a)} + t|a|\gamma^n \Biggr) \dif \gamma \label{passaggioCambioVariabileLegameCaloreAiry}\\ 
& =  nt|a|\pi u(t,x), \nonumber
\end{align}
where in step (\ref{passaggioCambioVariabileLegameCaloreAiry}) we used the change of variable $\frac{w}{nt|a|} =\gamma$ and in the last step we used that the cosine is an even function. The proof ends by observing that the coefficient $nt|a|\pi$ is constant in $x$ and that equation (\ref{equazioneDifferenzialeGenerale}) is linear and homogeneous.

The case of $n = 2m+1$ with $m $ odd follows in a similar way by suitably adapting (\ref{soluzioneProblemaDerviateParzialiGenerale}) and (\ref{iperAirySoluzioneEquazioneGeneraleCasoDispari}). 
\\The case of $n=2m$ can be studied by using (\ref{soluzioneCasoDifferenzialePariSpeciale}) and by keeping in mind function (\ref{caso4mDifferenza}) if $m$ is even and function (\ref{caso4m+2Differenza}) if $m$ is odd. It is very important to note that in these cases the condition $0>(-1)^{n/2}a = (-1)^m a$ reflects the need of $c<0$ in (\ref{caso4mDifferenza}) and $c>0$ in (\ref{caso4m+2Differenza}).
\end{proof}

\begin{remark}[Time-fractional and Riesz-Feller operators]

It is worthwhile considering the differential problem (\ref{problemaDifferenzialeSommaOperatoriRiesz}) involving other fractional operators. We now consider the Dzherbashyan-Caputo fractional derivative, defined as, for $n\in \mathbb{N}_0$ and $\nu>-1$,
\begin{equation*}\label{derivataCaputo}
\frac{\dif^\nu}{\dif t^\nu} f(t) = \begin{cases}
\begin{array}{l l}
\displaystyle\frac{1}{\Gamma(n-\nu)}\int_0^t (t-s)^{n-\nu-1}\frac{\dif^n}{\dif s^n}f(s)\dif s, & \ \text{if}\ n-1<\nu<n,\\[9pt]
\displaystyle\frac{\dif^n}{\dif t^n} f(t), &\ \text{if} \ \nu = n,
\end{array}\end{cases}
\end{equation*}
(we refer to \cite{M2010, P1999} for some elements on fractional calculus theory) and the Riesz-Feller (space) fractional operator, defined through its Fourier transform, that is
$$ \mathcal{F} D^{\alpha\theta,\theta}_x f (\gamma)= -|\gamma|^{\alpha\theta} e^{i\,\text{sgn}(\gamma) \frac{\pi\theta}{2} } \mathcal{F}f(\gamma),\ \ \ \gamma\in\mathbb{R}. $$

Now, let $\nu>0$ and $\theta \in(0,1)$. Also consider $\alpha = (\alpha_1,\dots,\alpha_N)$ with $\alpha_k\theta>1\ \forall\ k$. In light of Theorem 3.1 of \cite{CO2024b} and result (42) of \cite{MO2023}, the solution to the time-space fractional differential problem, with $\beta_k$ given in (\ref{coefficienteBetaProblemaDifferenzialeParzialeTotale}),
\begin{equation}\label{problemaDifferenzialeSommaOperatoriRiesz}
\begin{cases}
\frac{\partial^\nu u}{\partial t^\nu} = \sum_{k=1}^N \beta_k D^{\alpha_k\theta,\theta}_x u(t,x),\ \ \ t\ge0,\ x\in\mathbb{R}, \\
u(0, x) = \delta(x),\ \ \forall\ x,\\
\lim_{x\rightarrow\pm\infty}u(t,x) = 0,\ \ \forall\ t,
\end{cases}
\end{equation}
is the pseudo-density of the position of a sum of independent time-changed pseudo-processes 
$$Y_{N, \alpha}\Bigl( S_\theta\bigl(L_\nu(t)\bigr) \Bigr) = \sum_{k=1}^N X_{\alpha_k}\Bigl( S_\theta\bigl(L_\nu(t)\bigr) \Bigr), $$
where $X_{\alpha_k}$ are independent pseudo-processes of the type in (\ref{trasformataFourierPseudoProcesso}), $S_\theta$ is an independent stable subordinator of order $\theta$ and $L_\nu$ is an independent inverse of a pseudo-subordinator of order $\nu$. We refer to \cite{CO2024b, Z1986} for further details on subordinators, pseudo-subordinators and their inverses and to \cite{CO2024} for further results on time-changed stochastic processes relate to higher-dimensional fractional equations.

We point out that one can easily study the case of a slightly more articulated fractional time operator like showed in Example 4.2 of \cite{CO2024b}.
\hfill$\diamond$
\end{remark}

\begin{remark}[Moments]
It is interesting to study the moments of pseudo-processes. Here we show a method to derive all the moments of the odd pseudo-process $X_{2m+1}$ with $m\in\mathbb{N}$, having pseudo-density (\ref{soluzioneProblemDifferenzialeParzialeIperAiryIntere}). 
By keeping in mind formula (\ref{trasformataFourierSoluzioneProblemaDifferenzialeGenerale}) and by setting $N=1,\ a_1 = a\beta_1 = a(-1)^m,$ with $a\not =0$ (set $a=1$ for the classical density of the pseudo-process) since $\alpha_1 = 2n+1$, we have the $x$-Fourier transform of (\ref{soluzioneProblemDifferenzialeParzialeIperAiryIntere}),
\begin{align}
\mathcal{F}u_{2m+1}(t, \gamma) =  \int_{-\infty}^\infty e^{i\gamma x}u(t,x)\dif x  = e^{-iat \gamma^{2m+1} }
\end{align}
which yields
\begin{align}
\sum_{h=0}^\infty   \frac{(i\gamma)^h}{h!} \int_{-\infty}^\infty x^h u_{2m+1}(t,x)\dif x  &= \sum_{k=0}^\infty \frac{\bigl(-iat \gamma^{2m+1}\bigr)^k }{k!}.\nonumber 
\end{align}
For the identity principle of polynomials it must hold that for each $h\in\mathbb{N}_0$,
\begin{equation}
\frac{(i\gamma)^h}{h!} \int_{-\infty}^\infty x^h u(t,x)\dif x =
\begin{cases}\begin{array}{l l}
\frac{(-iat)^k \gamma^{(2m+1)k} }{k!},  & \ \exists\ k\in\mathbb{N}_0 \text{ s.t. } h = (2m+1)k,\\
0, & \ h\not=(2m+1)k, \ \forall\ k\in\mathbb{N}_0.
\end{array}\end{cases}
\end{equation}
Therefore, the non-zero moments are
\begin{equation}\label{momentoDispari}
 \int_{-\infty}^\infty x^{(2m+1)k} u_{2m+1}(t,x)\dif x = (-1)^{(m+1)k}\, (at)^k\frac{\bigl((2m+1)k\bigr)! }{k!}, \ \ \ k\in\mathbb{N}_0.
\end{equation}
Note that according to $m$ and $a$ the moments may change sign cyclically.

In light of Theorem \ref{proposizioneConvoluzione} and the above results, we can express the moments of the functions of type (\ref{iperAiryCasoSommaDiPseudoProcessi}) with odd $\alpha_1,\dots,\alpha_N$ in terms of those in (\ref{momentoDispari}). Indeed, with 
\begin{align}\label{momentiConvoluzione}
\int_{-\infty}^\infty x^h \frac{1}{\pi}\int_0^\infty \cos\Biggl(xw + t \sum_{k=1}^N w^{\alpha_k} \Biggr) \dif w \dif x &= \sum_{\substack{m_1,\dots,m_N=1\\ m_1+\dots +m_N = h}}^h \binom{h}{m_1,\dots,m_N} \prod_{k=1}^N \int_{-\infty}^\infty x^{m_k} u_{\alpha_k}(t,x)\dif x,
\end{align}
where we also used the fact that the first term of (\ref{momentiConvoluzione}) is the pseudo-density of the sum of independent pseudo-processes.
\\

We point out that in the even case, $\alpha_1=2n$, with $a_1 = (-1)^{m+1}$, non-zero moments are given by
\begin{equation*}
 \int_{-\infty}^\infty x^{2mk} u_{2m}(t,x)\dif x = (-1)^{(m+1)k}\, t^k\frac{\bigl(2mk\bigr)! }{k!}, \ \ \ k\in\mathbb{N},
\end{equation*}
which is equivalent to (\ref{momentoDispari}) and it follows by means of (\ref{trasformataFourierCasoDifferenzialePari}) and the above arguments. Note that for $m$ even the moments change sign cyclically.
\hfill$\diamond$
\end{remark}


\subsection*{\large{Declarations}}

\textbf{Ethical Approval.} This declaration is not applicable.
 \\
\textbf{Competing interests.}  The authors have no competing interests to declare.
\\
\textbf{Authors' contributions.} Both authors equally contributed in the preparation and the writing of the paper.
 \\
\textbf{Funding.} The authors received no funding.
 \\
\textbf{Availability of data and materials.} This declaration is not applicable.







\begin{thebibliography}{00}

\bibitem{A1838}
Airy, G.B. (1838), On the intensity of light in the neighbourhood of a caustic, Trans. Camb. Phil. Soc. 6, 379--402.

\bibitem{A2017}
Ansari, A. (2017) Riesz fractional derivatives of solutions of differentual equation $x^{(4)} + xy = 0$, Studia Scientiarum Mathematicarum Hungarica 54(2), 205--220. 

\bibitem{AA2014}
Ansari, A., Askari, H. (2014), On fractional calculus of $A_{2n+1}(x)$ function, Applied Mathematics and Computation 232, 487--497.

\bibitem{AA2020}
Askari, H., Ansari, A.  (2020), On Mellin transforms of solutions of differential equation $\chi^{(n)}(x)+\gamma_nx\chi(x)=0$, Analysis and Mathematical Physics 10, no.4, 57.

\bibitem{AA2024}
Askari, H., Ansari, A., Stokes phenomenon and representations of M-wright functions in uniform asymptotic expansions with many coalescing saddle points: order $\frac{1}{n}$, Preprint (2024).


\bibitem{BKOS2007}
L. Beghin, L., Kozachenko, Y., Orsingher, E., Sakhno, L. (2007), On the solutions of linear odd-order heat-type equations with random initial conditions, J. Stat. Phys. 127(4), 721--739.

\bibitem{BM2015}
Bonaccorsi, S., Mazzucchi, S. (2015), High order heat-type equations and random walks on the complex plane, Stoch. Process. Appl. 125(2), 797--818.


%



\bibitem{CO2024}
Cinque, F., Orsingher, E. (2024), Analysis of Fractional Cauchy problems with some probabilistic applications, J. Math. Anal. Appl. 536, 128188.

\bibitem{CO2024b}
Cinque, F., Orsingher, E., Higher-order fractional equations and related time-changed pseudo-processes, Preprint (2024) arXiv:2402.13691.

\bibitem{DF1965}
Daletsky, Y.L., Fomin, S.V. (1965), Generalized measures in function spaces, Theory Probab. Appl. 10(2), 304--316.

\bibitem{D2006}
Debbi, L. (2006), Explicit solutions of some fractional partial differential equations via stable subordinators, J. Appl. Math. Dtoch. Anal. 5:093502.




%
%


\bibitem{GHPD2013}
G\'{o}rska, K., Horzela, A., Penson, K.A., Dattoli, G. (2013), The higher-order heat-type equations via signed L\'{e}vy stable and generalized Airy functions, Journal of Physics A: Mathematical and theoretical 46, 425001.






\bibitem{H1978}
Hochberg, K.J. (1978), A signed measure on path space related to wiener measure, Ann. Probab. 6(3), 433--458.

%
%
%
%


\bibitem{K1960}
Krylov, V.Yu. (1960), Some properties of the distribution corresponding to equation $\frac{\partial u}{\partial t} = (-1)^{q+1}\frac{\partial^{2q} u}{\partial x^{2q}}$, Dokl. Akad. Nauk SSSR 132(6), 1254--1257.

\bibitem{L2003}
Lachal, A. (2003), Distributions of sojourn time, maximum and minimum for pseudo-processes governed by higher-order heat-type equations, Elect. J. Probab. 8:1--53.


\bibitem{L2007}
Lachal, A. (2007), First hitting time and place, monopoles and multipoles for pseudo-processes driven by the equation $\partial u/\partial t = \pm \partial^N u/ \partial x^N$, Electr. J. Probab. 12:300-353.

\bibitem{L2012}
Lachal, A. (2012), A survey on the pseudo.process driven by the high-order heat-type equation $\partial /\partial t =\pm \partial^N/\partial x^N$ concerning the hitting and sojourn times, Methodol. Comput. Appl. Probab. 14, 549--566.




\bibitem{M2010}
Mainardi, F., Fractional Calculus and Waves in Linear Viscoelasticity, Imperial College Press, London, 2010.

\bibitem{MO2023}
Marchione, M.M., Orsingher, E. (2023), Stable distributions and pseudo-processes related to fractional Airy functions, Stochastic Analysis and Applications 42(2), 435--450.





%
%

\bibitem{ODo2012}
Orsingher, E., D'Ovidio, M. (2012), Probabilistic representation of fundamental solutions to $\frac{\partial u}{\partial t} = \kappa_m\frac{\partial^m u}{\partial x^m}$, Electronic Communications in Probability 17, 1--12.




\bibitem{OT2014}
Orsingher, E., Toaldo, B. (2014), Pseudoprocesses related to space-fractional higher-order heat-type equations, Stochastic Analysis and Applications 32(4), 619--641.



\bibitem{P1999}
Podlubny, I., Fractional Differential Equations, Mathematics in Sciences and Engineering , Academic Press, San Diego, 1999.



\bibitem{R1995}
Reid, W.H. (1995), Integral representations for products of Airy functions, Zeitschrift f\"{u}r angewandte Mathematik und Physik 46, 159--170.
\bibitem{R1997}
Reid, W.H. (1997), Integral representations for products of Airy functions Part 2: Cubic products, Zeitschrift f\"{u}r angewandte Mathematik und Physik 48, 646--655.
\bibitem{R1997b}
Reid, W.H. (1997), Integral representations for products of Airy functions, Part 3: Quartic products, Zeitschrift f\"{u}r angewandte Mathematik und Physik 48, 656--664.


\bibitem{S1950}
Scorer, R.S. (1950), Numerical evaluation of integrals of the form $I = \int_{x_1}^{x_2} f(x) e^{i\phi(x)} dx$ and the tabulation of the function $Gi(z) = (1/\pi)\int_0^\infty\sin(uz+\frac{1}{3}u^3)du$, Quart. Journ. Mech. and Applied Math., Vol. III, Pt. 1.




\bibitem{VS2010}
Vallée, O., Soares, M., Airy functions and applications to physics, 2nd edition, Imperial College Press, London, 2010.

\bibitem{V2010}
Varlamov, V. (2010), Integrals involving products of Airy functions, their derivatives and Bessel functions, J. Math. Anal. Appl. 370, 687--702.

\bibitem{Z1986}
Zolotarev, V.M. (1986), One-Dimensional Stable Distributions, Translations of Mathematical Monographs, vol.65, American Mathematical Society.

\end{thebibliography}

\footnotesize{

}

\end{document}